\tikzset{node distance=2cm, auto}
\newcommand\Q{\mathbb Q}
\newcommand\Z{\mathbb Z}
\newcommand\N{\mathbb N}
\newcommand{\Mod}[1]{\prescript{}{#1}{\mathrm{mod}}}
\newtheorem{theorem}{Theorem}[section]
\newtheorem{lemma}[theorem]{Lemma}
\newtheorem{proposition}[theorem]{Proposition}
\begin{document}

\begin{center}

\textbf{Unit Groups of Representation Rings and their Ghost Rings as Inflation Functors}\\

\vspace{.5cm}

Rob Carman\\
Department of Mathematics\\
University of California, Santa Cruz\\
wcarman@ucsc.edu\\

\end{center}

\vspace{.5cm}

\textbf{Abstract.} The theory of biset functors developed by Serge Bouc has been instrumental in the study of the unit group of the Burnside ring of a finite group, in particular for the case of $p$-groups.
The ghost ring of the Burnside ring defines an inflation functor, and becomes a useful tool in studying the Burnside ring functor itself.
We are interested in studying the unit group of another representation ring: the trivial source ring of a finite group.
In this article, we show how the unit groups of the trivial source ring and its associated ghost ring define inflation functors.
Since the trivial source ring is often seen as connecting the Burnside ring to the character ring and Brauer character ring of a finite group, we study all these representation rings at the same time.
We point out that restricting all of these representation rings' unit groups to their torsion subgroups also give inflation functors, which we can completely determine in the case of the character ring and Brauer character ring.

%%INTRODUCTION%%
\section{Introduction}\label{Intro}
The long term goal of the author is to determine the units of finite order
of the trivial source ring of a finite group.
These units --
sometimes alternatively referred to as ``orthogonal units'' for reasons we will describe later -- 
give rise to certain autoequivalences of blocks
studied by Robert Boltje and Philipp Perepelitsky in \cite{BolPer}.
For $p$-groups, the trivial source ring is isomorphic to the Burnside ring,
and for $p'$-groups, the trivial source ring is isomorphic to
both the character and Brauer character rings.
So it makes sense to study all these representation rings together.
The unit group of the Burnside ring of a $p$-group has been determined by Serge Bouc already
in \cite{Bouc3} using his theory of biset functors.
And the finite order units of the character ring were completely determined
by Kenichi Yamauchi in \cite{Yamauchi}.
At the end of this article we use his result to completely determine the finite order units of the Brauer character ring.

In Section \ref{RepRings},
we describe all the representations rings that we will use throughout this article for a finite group $G$:
the Burnside ring, the trivial source ring, the character ring, and the Brauer character ring.
Each of the representation rings we consider has a distinguished automorphism of interest
whose square is the identity morphism.
Since these automorphisms are induced by taking dual modules,
any map between rings that respects the two rings' distinguished automorphisms
will be said to ``preserve duals.''

In section \ref{GhostRings},
we construct a ghost ring and ghost map for each representation ring.
In each case, the ghost map embeds the representation ring into its associated ghost ring,
which is free abelian with the same rank as its associated representation ring.
The ghost map will have a finite cokernel in every case.
The maps between each of the representation rings
has a unique extension to the level of ghost rings,
and we describe all of these extensions in this section as well.
We also point out that ghost rings have a duality operator,
and the relevant maps between ghost rings also preserve duals.

We recall the theory of algebraic maps developed by Andreas Dress in section \ref{AlgMaps}.
And in section \ref{WreathProd}, we recall the construction of the wreath product.

In section \ref{TenSet}, we consider two finite groups, $G$ and $H$,
and right-free $(G,H)$-bisets.
Given such a right-free biset $U$, we construct a tensor induction functor
$s_U:\prescript{}{H}{\mathrm{set}}\longrightarrow\prescript{}{G}{\mathrm{set}}$.
This construction is similar to one given by Bouc in \cite{Bouc},
and we list several of its interesting properties.
We then show that the functor $s_U$
induces a multiplicative map $B(U)$ between the Burnside rings of $H$ and $G$.
Readers familiar with the literature might notice
that this is also the notation used for the additive Burnside functor,
but since we only consider the multiplicative theory in this article,
there should be no confusion between the two.
In fact, when $U$ consists of a single $H$-orbit, the two coincide.

We describe the analogous tensor induction functor
for modules over group rings in section \ref{TenMod}.
Again this is naturally isomorphic to a tensor induction functor defined by Bouc in \cite{Bouc}.
Then in section \ref{TenFun},
we use this functor to define several multiplicative maps
between the other representation rings.
Also we describe how to extend all of these maps to the level of ghost rings
and show that these extensions are unique in a certain sense.
The correct extension was previously described in the case of the character ring
by David Gluck and Marty Isaacs in \cite{GluIsa}, at least in a particular case.
The extension for the trivial source ring was previously unknown however,
and we spend the bulk of this article discussing the correct extension,
with its relevant properties given in Theorem \ref{tUext}.

In section \ref{InfFun}, we quickly recall the notion of an inflation functor,
and in section \ref{UnitGp} we show that unit groups of each representation ring
(and its associated ghost ring) form inflation functors.
Lastly, we explain orthogonal units of representation rings and ghost rings in section \ref{OrthUnits},
and show that restricting to these subgroups of the respective unit groups
gives inflation functors as well.
We conclude this article with a complete description of the orthogonal unit group of the Brauer character ring in Theorem \ref{OrthBra}.

{\bf Notation and Conventions.}
Throughout this article, every $G$-set is assumed to be finite,
and we denote the category of $G$-sets by $\prescript{}{G}{\mathrm{set}}$.
And for any ring $R$, every $R$-module considered is assumed to be finitely generated.
We will denote the category of such modules by $\Mod{R}$.
When $M$ and $N$ are two $RG$-modules,
we use the notation $M\mid N$ to denote that $M$ is isomorphic to a direct summand of $N$.
When we say that $M$ is a trivial source $RG$-module,
we mean that $M\mid RX$ for some permutation $RG$-module $RX$ defined by a $G$-set $X$.
By a $(G,H)$-biset, we mean a finite set $U$ that is a left $G$-set and a right $H$-set
such that $g(uh)=(gu)h$ for all $g\in G, u\in U,$ and $h\in H$.
We say $U$ is right-free if $uh=uh'$ implies $h=h'$ always.
If $u\in U$ and $T$ is a subgroup of $H$ (denoted by $T\leq H)$,
we define $\prescript{u}{}{T}=\{g\in G:gu=ut\text{ for some }t\in T\}$,
which is a subgroup of $G$.
And if $S\leq G$, we define $S^u=\{h\in H:su=uh\text{ for some }s\in S\}$,
which is a subgroup of $H$.
When $U$ is right-free and $u\in U$, its stabilizer (by considering $U$ as a $G\times H$-set)
is the graph of the homomorphism
$\varphi_u:\prescript{u}{}{H}\longrightarrow H$ defined by $\varphi_u(g)=h$ if $gu=uh$.
In this situation, we see that $S^u=\varphi_u(S\cap\prescript{u}{}{H})$.
When $S\leq G,$ and $T\leq H$,
then also restricting actions on both sides makes $U$ an $(S,T)$-biset,
and by $S\backslash U/T$ we mean a set of representatives of the orbits of $U$
under these restricted actions.

{\bf Acknowledgments.} This material is based upon work supported by a grant from the University of California Institute for Mexico and the United States (UC MEXUS) and the Consejo Nacional de Ciencia y Tecnolog\'ia de M\'exico (CONACYT).
The author would like to thank Robert Boltje for his constant guidance and support throughout the duration of this project,
and to Serge Bouc for bringing the article \cite{Yamauchi} to his attention.

%%REPRESENTATION RINGS%%
\section{Representation Rings}\label{RepRings}

We first briefly describe all the rings
and the morphisms between them
that we will consider throughout this article.
Let us fix a finite group $G$, a prime $p$,
and a $p$-modular system $(K,\mathcal O,F)$ large enough for $G$.
Assume that $(\pi)$ is the maximal ideal of $\mathcal O$, and $F=\mathcal O/(\pi)$.
We let $B(G)$ denote the Burnside ring of $G$,
$R_K(G)$ the character ring of $G$,
and $R_F(G)$ the Brauer character ring of $G$.
Also let $T_{\mathcal O}(G)$ denote the Grothendieck ring of the category of finitely generated trivial source $\mathcal OG$-modules.
Similarly, define $T_F(G)$ for trivial source $FG$-modules.
The functor $F\otimes_{\mathcal O}-:\Mod{\mathcal OG}\longrightarrow\Mod{FG}$
induces the canonical isomorphism
$T_{\mathcal O}(G)\stackrel{\sim}{\longrightarrow} T_F(G)$.
So we may identify $T_{\mathcal O}(G)$ with $T_F(G)$,
which we refer to as the trivial source ring of $G$.

Now if $X$ is a finite $G$-set, then the linearization
$\mathcal OX$ is a trivial source $\mathcal OG$-module,
and $FX$ is a trivial source $FG$-module.
So we have ring morphisms $B(G)\longrightarrow T_{\mathcal O}(G)$
and $B(G)\longrightarrow T_F(G)$.
It is easy to see that they commute with the isomorphism
$T_{\mathcal O}(G)\stackrel{\sim}{\longrightarrow} T_F(G)$,
so we will denote both maps by $l_G$.
If $G$ is a $p$-group, then the maps $l_G$ are isomorphisms.
If $V$ is a trivial source $\mathcal OG$-module,
then $K\otimes_{\mathcal O}V$ is a $KG$-module, hence has a $K$-character $\chi_V$.
We then have a ring morphism $c_G:T_{\mathcal O}(G)\longrightarrow R_K(G)$
which maps the class $[V]$ to $\chi_V$.
If $M$ is a trivial source $FG$-module, it has a Brauer character $\tau_M$,
and so we have a ring morphism $T_F(G)\longrightarrow R_F(G)$,
which maps the class $[M]$ to the Brauer character $\tau_M$.
Finally, we have the decomposition map $d_G:R_K(G)\longrightarrow R_F(G)$.
In the case where $p$ doesn't divide the order of $G$,
all three of $b_G,c_G,$ and $d_G$ are isomorphisms. 	
Altogether we have the following commutative diagram:

\begin{equation}
\label{reprings}
\raisebox{-0.5\height}{\includegraphics{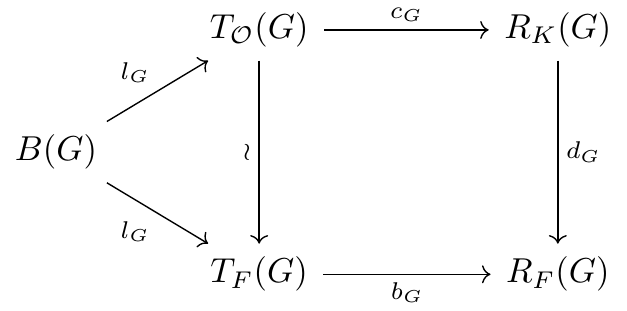}}
\end{equation}

These are the representation rings and maps we will focus on throughout.
Recall that each of these representation rings are free abelian with finite rank.
See \cite{CurRein} and \cite{Schneider} for more details.
Notice also that each of these representation rings (except for $B(G)$)
has a distinguished ring automorphism induced by taking dual modules.
Moreover, the square of such an automorphism is always the identity morphism.
For instance, if $M$ is a trivial source $FG$-module,
then its dual module $M^\circ=\text{Hom}_F(M,F)$ is again a trivial source $FG$-module.
So if $a\in T_F(G)$, then $a=[M]-[N]$ for some trivial source $FG$-modules $M$ and $N$.
The dual of $a$ is $a^\circ:=[M^\circ]-[N^\circ]$.
Notice $(a^\circ)^\circ=a$ for all $a\in T_F(G)$.
Similarly, we have a dual operator on $T_{\mathcal O}(G), R_F(G),$ and $R_K(G)$.
For completion, we just consider the identity operator on $B(G)$
as the distinguished automorphism of interest.
We denote all these automorphisms by $-^\circ$.
Since any permutation module is isomorphic to its dual module,
we see that the maps $l_G$ preserve duals.
It is easy to see that all the other ring morphisms preserve duals.
That is, for instance, $b_G(a^\circ)=b_G(a)^\circ$ for all $a\in T_F(G)$.

%%GHOST RINGS%%
\section{Ghost Rings}\label{GhostRings}
Now for each of the representation rings in the previous section
we want to describe an associated ghost ring and ghost map.
First, if we let $\mathscr{S}(G)$ denote the set of subgroups of $G$, we have the ring morphism

$$\phi_G:B(G)\longrightarrow\prod_{S\in\mathscr{S}(G)}\Z,\quad[X]\mapsto(|X^S|)_{S\leq G},$$

\noindent where $X^S$ denotes the subset of $X$ fixed by $S$.
It is well-known that this map is injective and that its image lies in the $G$-fixed points
when considering the conjugation action of $G$ on $\prod_{S\in\mathscr{S}(G)}\Z$.
So if we set $\tilde B(G):=(\prod_{S\in\mathscr{S}(G)}\Z)^G,$
then $\phi_G$ is a ring morphism $B(G)\longrightarrow\tilde B(G)$,
which is well-known to have finite cokernel.
We refer to $\tilde B(G)$ as the ghost ring of $B(G)$,
and $\phi_G$ as the associated ghost map.

Next we let $e\in\N$ be the exponent of $G$.
We can then write $e=p^ah$ for some $a,h\in\N$ with $p\nmid h$.
Let $\zeta\in K$ be a primitive $e$th root of unity.
In fact, $\zeta\in\mathcal O$.
Hence $\Z[\zeta]\subseteq\mathcal O$ and $\Q(\zeta)\subseteq K$.
We set $\Gamma:=\text{Gal}(\Q(\zeta)/\Q)$.
Every element of $\Gamma$ is of the form $\gamma_i$
where $i$ is an integer relatively prime to $e$ and $\gamma_i(\zeta)=\zeta^i$.
For each such $\gamma_i\in\Gamma,$ we let $i^*$ be an integer such that $ii^*\equiv1\pmod e$.
Hence $\gamma_{i^*}=\gamma_i^{-1}$.
Now if we let $\mathscr{E}(G)$ denote the set of elements of $G$,
we have a ring morphism

$$\varepsilon_G:R_K(G)\longrightarrow\prod_{x\in\mathscr{E}(G)}\Z[\zeta],
\quad\chi\mapsto(\chi(x))_{x\in\mathscr{E}(G)},$$

\noindent where $\chi$ denotes a (virtual) character of $G$.
Now $G$ acts on the ring $\prod_{x\in\mathscr{E}(G)}\Z[\zeta]$ via conjugation,
and also $\Gamma$ acts on this ring via the action
$\gamma_i\cdot(w_x)_{x\in\mathscr{E}(G)}=(\gamma_i(w_{x^{i^*}}))_{x\in\mathscr{E}(G)}$.
These two actions clearly commute,
hence $G\times\Gamma$ acts on the codomain of $\varepsilon_G$.
The image of $\varepsilon_G$ is fixed by $G\times\Gamma$,
so if we let $\tilde R_K(G)$ be the fixed point subring
$(\prod_{x\in\mathscr{E}(G)}\Z[\zeta])^{G\times\Gamma}$,
we see that we have a ring morphism $\varepsilon_G:R_K(G)\longrightarrow\tilde R_K(G)$.
It is well-known that $\varepsilon_G$ is injective with finite cokernel.
So $\tilde R_K(G)$ and $\varepsilon_G$ will be the ghost ring and ghost map of $R_K(G)$.
Similarly, if we set $\mu:=\zeta^{p^a}$,
then $\mu$ is a primitive $h$th root of unity in $\mathcal O$,
where $h$ is the $p'$-part of the exponent of $G$.
So if $\tau\in R_F(G)$ is a (virtual) Brauer character of $G$,
then $\tau$ is a function on the set of $p$-regular elements of $G$,
taking values in $\Z[\mu]\subseteq\mathcal O$.
If we let $\mathscr{E}_p(G)$ denote the set of $p$-regular elements of $G$,
then we have a ring morphism

$$\xi_G:R_F(G)\longrightarrow\prod_{y\in\mathscr{E}_p(G)}\Z[\mu],
\quad\tau\mapsto(\tau(y))_{y\in\mathscr{E}_p(G)}.$$

\noindent Again, if we set $\Delta:=\text{Gal}(\Q(\mu)/\Q)$,
every element of $\Delta$ is of the form $\delta_i$,
where $i\in\Z$ is relatively prime to $h$, and $\delta_i(\mu)=\mu^i$.
Then similar to above, $G\times\Delta$ acts on $\prod_{y\in\mathscr{E}_p(G)}\Z[\mu]$.
So we let $\tilde R_F(G):=(\prod_{y\in\mathscr{E}_p(G)}\Z[\mu])^{G\times\Delta}$
be the ghost ring of $R_F(G),$
and $\xi_G:R_F(G)\longrightarrow\tilde R_F(G)$ is the associated ghost map.

We next want to define a ghost ring for $T_{\mathcal O}(G)$ and $T_F(G)$
and ghost maps that commute with the canonical isomorphism
$T_{\mathcal O}(G)\stackrel{\sim}{\rightarrow}T_F(G)$.
Notice first that if $M$ is a trivial source $\mathcal OG$- or $FG$-module,
and $P\leq G$ is a $p$-subgroup,
then the Brauer construction $M(P)$ is an $F[N_G(P)/P]$-module,
hence has a Brauer character that takes values in $\Z[\mu]\subseteq\mathcal O$.
Let us define the set $\mathscr{T}_p(G)$
to be the set of all pairs $(E,c)$ where $E$ is a $p$-hypo-elementary subgroup of $G$
with $\langle c\rangle=E/O_p(E),$ a cyclic $p'$-group.
For a pair $(E,c)\in\mathscr{T}_p(G)$ and a trivial source module $M$,
if we let $\tau_{M,E}$ denote the Brauer character of $M(O_p(E))$,
we can define

$$\tau_G:T_F(G)\longrightarrow\prod_{(E,c)\in\mathscr{T}_p(G)}\Z[\mu],
\quad[M]\mapsto(\tau_{M,E}(c))_{(E,c)\in\mathscr{T}_p(G)}.$$
Then $G\times\Delta$ acts on the codomain of this function via
$(x,\delta_i)\cdot(z_{(E,c)})_{(E,c)\in\mathscr{T}_p(G)}=
(\delta_i(z_{(E^x,(c^{i^*})^x)})_{(E,c)\in\mathscr{T}_p(G)},$
and the image of $\tau_G$ is fixed by this action.
So if we set $\tilde T_F(G):=(\prod_{(E,c)\in\mathscr{T}_p(G)}\Z[\mu])^{G\times\Delta}$,
then $\tilde T_F(G)$ will be the ghost ring of $T_F(G)$
and $\tau_G:T_F(G)\longrightarrow\tilde T_F(G)$ is the associated ghost map.
Similarly, we can set $\tilde T_{\mathcal O}(G):=\tilde T_F(G)$,
and we have a morphism $T_{\mathcal O}(G)\longrightarrow \tilde T_{\mathcal O}(G)$,
which we will also denote by $\tau_G$.
This makes sense to do
since applying the Brauer construction to a trivial source $\mathcal OG$-module
is the same as applying the functor $F\otimes_{\mathcal O}-$
and then applying the Brauer construction to the resulting trivial source $FG$-module.
In other words, the isomorphism $T_{\mathcal O}(G)\stackrel{\sim}{\rightarrow}T_F(G)$
extends via the ghost maps to the equality $\tilde T_{\mathcal O}(G)=\tilde T_F(G)$.

We similarly want to extend the maps $l_G, b_G, c_G,$ and $d_G$
to the level of ghost rings via the various ghost maps.
Notice that such extensions must be unique since
each ghost ring contains its associated representation ring as a finite index subgroup,
and all the ghost maps are additive.
First we explain an extension of $l_G$.
If $(E,c)\in\mathscr{T}_p(G)$, then in particular $E$ is a subgroup of $G$.
So we can define the function

$$\tilde l_G:\tilde B(G)\longrightarrow\tilde T_{\mathcal O}(G),
\quad(n_S)_{S\in\mathscr{S}(G)}\mapsto(n_E)_{(E,c)\in\mathscr{T}_p(G)},$$

\noindent which is clearly a ring morphism.
And if $X$ is a $G$-set,
then $(\tau_G\circ l_G)([X])=(\tau_{FX,E}(c))_{(E,c)\in\mathscr{T}_p(G)}$.
Now $(FX)(O_p(E))\cong FX^{O_p(E)}$ as permutation $F[N_G(O_p(E))/O_p(E)]$-modules,
so the value of the Brauer character of $(FX)(O_p(E))$ at $c$ is the number of fixed points:
$|(X^{O_p(E)})^{\langle c\rangle}|=|X^{\langle O_p(E),c\rangle}|=|X^E|.$
On the other hand,
$(\tilde l_G\circ\phi_G)([X])=\tilde l_G((|X^S|)_{S\in\mathscr{S}(G)}
=(|X^E|)_{(E,c)\in\mathscr{T}_p(G)}$.
So we see $\tau_G\circ l_G=\tilde l_G\circ\phi_G$,
that is $\tilde l_G$ extends $l_G$ to the appropriate ghost rings.

Next we describe an extension of $b_G$.
If $y\in\mathscr{E}_p(G)$ and $\mathbf 1$ denotes the trivial subgroup of $G$,
then the subgroup $\langle y\rangle\leq G$ is $p$-hypo-elementary
with $O_p(\langle y\rangle)=\mathbf 1$.
Hence $(\langle y\rangle,\{y\})\in\mathscr{T}_p(G)$.
So we can define the function

$$\tilde b_G:\tilde T_F(G)\longrightarrow\tilde R_F(G),
\quad(z_{(E,c)})_{(E,c)\in\mathscr{T}_p(G)}\mapsto
(z_{(\langle y\rangle,\{y\})})_{y\in\mathscr{E}_p(G)},$$

\noindent which is clearly a ring morphism.
We see that if $M$ is a trivial source $FG$-module,
then $(\xi_G\circ b_G)([M])$ and $(\tilde b_G\circ\tau_G)([M])$
both take the value of the Brauer character of $M$ at $y$ for all $y\in\mathscr{E}_p(G)$.
Hence $\xi_G\circ b_G=\tilde b_G\circ\tau_G$,
and therefore $\tilde b_G$ is an extension of $b_G$ via the appropriate ghost rings.

Similarly, we extend the function $c_G$ in the following way:
If $x\in\mathscr{E}(G)$, we can write $x=x_px_{p'}$,
where $x_p$ is the $p$-part of $x$, and $x_{p'}$ is the $p'$-part of $x$.
Then $\langle x\rangle\leq G$ with $O_p(\langle x\rangle)=\langle x_p\rangle$
and $\langle x\rangle/O_p(\langle x\rangle)=
\langle x\langle x_p\rangle\rangle$.
Hence $\langle x\rangle$ is $p$-hypo-elementary
with $(\langle x\rangle,x\langle x_p\rangle)\in\mathscr{T}_p(G)$.
So we can define the function

$$\tilde c_G:\tilde T_{\mathcal O}(G)\longrightarrow\tilde R_K(G),
\quad(z_{(E,c)})_{(E,c)\in\mathscr{T}_p(G)}\mapsto
(z_{(\langle x\rangle,x\langle x_p\rangle)})_{x\in\mathscr{E}(G)},$$

\noindent which is clearly a ring morphism extending $c_G$.
Lastly, we describe the extension of $d_G$.
Since $\mathscr{E}_p(G)\subseteq\mathscr{E}(G),$ we can simply define the function

$$\tilde d_G:\tilde R_K(G)\longrightarrow\tilde R_F(G),
\quad(w_x)_{x\in\mathscr{E}(G)}\mapsto(w_y)_{y\in\mathscr{E}_p(G)}.$$

\noindent It is easy to see $\tilde d_G$ extends $d_G$
since two characters have equal image under $d_G$ if and only if
the characters take the same values on all of $\mathscr{E}_p(G)$.
Notice that $\tilde d_G$ is surjective, just like $d_G$.
Altogether we get the following commutative diagram of ghost rings:

\begin{equation}
\label{ghostrings}
\raisebox{-0.5\height}{\includegraphics{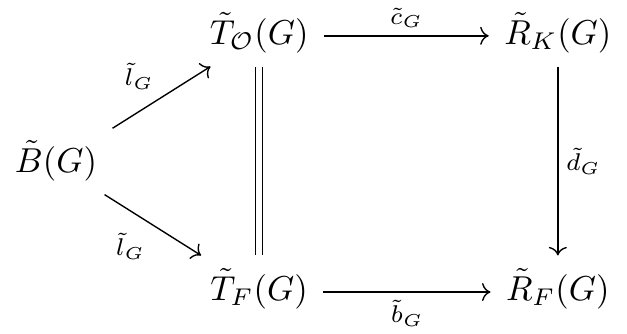}}
\end{equation}

So Diagram \ref{ghostrings} extends Diagram \ref{reprings} to the level of ghost rings.
As we noted all the representation rings in Diagram \ref{reprings} have a duality operator,
we also want to note all the ghosts rings also have a duality operator.
As with $B(G)$, we just consider the identity on $\tilde B(G)$.
Now since $-1$ is always relatively prime to $e$, we have $\gamma_{-1}\in\Gamma$,
and $\gamma_{-1}^2=\gamma_1$ is the identity of $\Gamma$.
So we see that $\gamma_{-1}$ induces an automorphism of $\tilde R_K(G)$
whose square is the identity morphism.
This defines the duality operator on $\tilde R_K(G)$.
To be more explicit, if $(w_x)_{x\in\mathscr{E}(G)}\in\tilde R_K(G)$,
then its dual element is
$(w_x)^\circ_{x\in\mathscr{E}(G)}=(\gamma_{-1}(w_x))_{x\in\mathscr{E}(G)}$.
We similarly can define duality operators on the other ghost rings
$\tilde T_{\mathcal O}(G), \tilde T_F(G),$ and $\tilde R_F(G)$ since also $\delta_{-1}\in\Delta$.
It is clear that all the morphisms in Diagram \ref{ghostrings} commute with
these various duality operators.

%%ALGEBRAIC MAPS%%
\section{Algebraic Maps}\label{AlgMaps}

Here we recall some of the theory of algebraic maps developed by Andreas Dress
which we will use throughout this article.
The setup is the following: Let $A$ be a semiring and $E$ a commutative ring.
We consider set maps $f:A\longrightarrow E$ and for an $a\in A,$ we define

$$D_af:A\longrightarrow E,\hspace{.1cm}x\mapsto f(x+a)-f(x).$$

We say that $f:A\longrightarrow E$ is \emph{algebraic} if there exists some $n\in\N$ such that

$$D_{a_1}D_{a_2}\cdots D_{a_{n+1}}f=0\hspace{.2cm}\text{for all }a_1,a_2,\dots,a_{n+1}\in A.$$

\noindent If such an $n\in\N$ exists,
then the least such $n$ will be called the \emph{degree} of $f$.
If $g:A\longrightarrow E$ is an additional map,
then we can define the pointwise addition $f+g:A\longrightarrow E$
by $(f+g)(x)=f(x)+g(x)$ for all $x\in A$.
Similarly, we can define the product $fg$ by $(fg)(x)=f(x)g(x)$.
And if $c\in E$ is a constant,
then we can consider the map $cf$ defined by $(cf)(x)=cf(x)$.
The set of all functions $A\longrightarrow E$ is then an $E$-module,
and we can therefore talk about linear independence over $E$
for collections of functions $A\longrightarrow E$.
At this point, we would like to collect a few facts about algebraic maps that we will use later.
The proofs can be found in \cite{CurRein} and \cite{Dress}.

\begin{proposition}\label{algprops}
Let $A$ be a semiring and let $E$ and $E'$ be commutative rings.
\begin{enumerate}
\item A nonconstant function $f:A\longrightarrow E$ is algebraic of degree 1
if and only if $f=h+c$, where $c\in E$ is constant and $h:A\longrightarrow E$
is a nonzero additive map.
\item If $f:A\longrightarrow E$ is algebraic of degree $n$ and $c\in E$ is constant,
then $cf$ is algebraic of degree $\leq n$,
with equality if $c$ is not a zero divisor in $E$.
\item If $f,g:A\longrightarrow E$ are algebraic of degrees $m$ and $n$,
then $f+g$ is algebraic of degree $\leq\max\{m,n\}$,
with equality if $f$ and $g$ are linearly independent over $E$.
\item If $f,g:A\longrightarrow E$ are algebraic of degrees $m$ and $n$,
then $fg$ is algebraic of degree $\leq m+n$.
\item If $f:A\longrightarrow E$ is algebraic of degree $m$
and $g:E\longrightarrow E'$ is algebraic of degree $n$,
then $g\circ f$ is algebraic of degree $\leq mn$.
\item Suppose $f:A\longrightarrow E$ and $a_0\in A$ are such that
$D_{a_0}f$ is algebraic of degree $n$ and $D_af$ is algebraic of degree $\leq n$ for all $a\in A$.
Then $f$ is algebraic of degree $n+1$.
\item If $f:A\longrightarrow E$ is algebraic of degree $n$ and $i:A\longrightarrow\bar A$
is the canonical map from $A$ into its associated Grothendieck ring $\bar A$,
then there exists a unique map $\bar f:\bar A\longrightarrow E$ such that $\bar f\circ i=f$.
Moreover, $\bar f$ is algebraic of degree $n$, and if $f$ is multiplicative, then so is $\bar f$.
\end{enumerate}
\end{proposition}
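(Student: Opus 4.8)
The plan is to prove the seven assertions in the listed order, since each leans on the calculus of the difference operators $D_a$ and several invoke the earlier items. The three basic facts driving everything are: $D_a$ is $E$-linear in its function argument and the operators $D_a,D_b$ commute; a map $f$ of degree $d$ has $D_af$ of degree $\le d-1$ for every $a$; and there are product/chain identities
\[
D_a(fg) \;=\; (S_af)(D_ag) + (D_af)g, \qquad D_a(g\circ f)(x) \;=\; g\big(f(x)+(D_af)(x)\big) - g\big(f(x)\big),
\]
where $S_af(x):=f(x+a)$; note $S_af = f + D_af$, so by item (3) a shift never raises degree. Given these, items (1), (2), (6) are short. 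For (1): algebraicity of degree $1$ says each $D_af$ is a constant; evaluating at $0$ gives that constant as $f(a)-f(0)$, and then $f(x+a)-f(x) = f(a)-f(0)$ rearranges to show $h := f - f(0)$ is additive, so $f = h+c$ with $c = f(0)$, and $h\ne 0$ since $f$ is nonconstant; conversely, if $f = h+c$ with $h$ additive then $D_af$ is the constant $h(a)$, so $f$ is algebraic of degree exactly $1$ precisely when $h\ne 0$. For (2): iterating $D_a(cf)=cD_af$ gives degree $\le n$, and if $c$ is not a zero divisor no cancellation can lower it. For (6): $D_{a_{n+2}}f$ has degree $\le n$, so $n+1$ further operators kill it, giving degree $\le n+1$; and degree $\le n$ would force $\deg D_{a_0}f\le n-1$, contradicting the hypothesis. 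For (3), $E$-linearity of the iterated operators gives degree $\le\max\{m,n\}$; for the equality assertion I would follow the precise formulation of \cite{Dress} (in essence, running the bound in reverse via $f = (f+g) + (-1)g$, which recovers $\deg f$ once the relevant constants are non-zero-divisors), since that is exactly the generality in which it is used later.

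For item (4) I would induct on $m+n$. If $m=0$ or $n=0$ one factor is constant and (2) applies, so assume $m,n\ge 1$. The Leibniz identity writes $D_a(fg)$ as the sum of $(S_af)(D_ag)$ and $(D_af)g$; since $\deg S_af\le m$, $\deg D_ag\le n-1$, $\deg D_af\le m-1$, $\deg g = n$, the inductive hypothesis bounds each product by $m+n-1$, and then (3) gives $\deg D_a(fg)\le m+n-1$ for every $a$. Now (6) yields $\deg(fg)\le m+n$.

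Item (5) is the main obstacle. The base case $\deg g\le 1$ is easy: either $g$ is constant, or $g = h+c$ with $h$ additive, and then $D_a(h\circ f)=h\circ D_af$, so a secondary induction on $m$ (composition of a \emph{fixed} additive map with an algebraic map does not raise degree) together with (6) gives $\deg(g\circ f)\le m$. For $\deg g = n\ge 2$ I would prove by induction on $r$ a finite-difference Fa\`a di Bruno expansion: $D_{a_1}\cdots D_{a_r}(g\circ f)$ is a finite sum, indexed by the set partitions $\pi$ of $\{1,\dots,r\}$, whose term for $\pi$ is the product of an order-$|\pi|$ iterated difference of $g$, evaluated at a suitable $f$-shifted argument, with one iterated difference of $f$ for each block of $\pi$, the order of that difference being the size of the block. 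The case $r=1$ is the chain identity above, and the inductive step applies $D_{a_{r+1}}$ using the two product/chain identities. Granting the expansion, take $r = mn+1$: in any set partition of $\{1,\dots,r\}$ either some block has size $>m$, so the corresponding iterated difference of $f$ vanishes identically because $\deg f = m$, or there are more than $n$ blocks, so the iterated difference of $g$ vanishes because $\deg g = n$; hence every term is zero, $D_{a_1}\cdots D_{a_{mn+1}}(g\circ f)=0$, and $\deg(g\circ f)\le mn$. The fiddly point is tracking the shifted evaluation points for the $g$-differences through the induction; here I would defer to \cite{Dress}.

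For item (7) I would build $\bar f$ explicitly on formal differences by a Newton (finite-difference) expansion: since $f$ is algebraic of degree $n$, it agrees on $A$ with a polynomial-type function, whose value on a difference $i(a)-i(b)\in\bar A$ is a terminating alternating sum of iterated differences of $f$ at $a$ against the increments of $b$. Independence of this value from the chosen representative of $i(a)-i(b)$ — i.e. compatibility with $i(a)-i(b) = i(a+c)-i(b+c)$ and with cancellation in $A$ — and the identity $\bar f\circ i = f$ follow directly from the formula, using that all $D$-products of length exceeding $n$ vanish; uniqueness among algebraic maps holds because $i(A)$ together with differences generates $\bar A$ and two algebraic maps agreeing on $i(A)$ agree everywhere. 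That $\bar f$ is algebraic of degree $n$ is then immediate, since $D_a$ on $\bar A$ restricts to $D_a$ on $A$: the $(n+1)$-fold differences of $\bar f$ vanish on $i(A)$, hence everywhere, while sharpness of $\deg f = n$ shows they are nonzero somewhere. Finally, if $f$ is multiplicative then $\bar f(xy)=\bar f(x)\bar f(y)$ holds for $x,y\in i(A)$ by multiplicativity of $f$, and it propagates to all of $\bar A$ by expanding a product of formal differences as $i(ab)+i(a'b')-i(ab')-i(a'b)$ and feeding this into the explicit formula for $\bar f$ on sums and differences. The cleanest reference for this universal property is \cite{Dress}, with \cite{CurRein} as an alternative.
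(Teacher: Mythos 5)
The paper does not give its own proof of Proposition~\ref{algprops}: immediately after the statement it refers the reader to \cite{CurRein} and \cite{Dress}. Your reconstruction follows exactly the route of those sources, and where you fill in details they are correct. In particular, (1) is argued completely (each $D_af$ is constant, equal to $f(a)-f(0)$, so $h:=f-f(0)$ is additive); (2) uses $D_a(cf)=cD_af$; (4) correctly applies the discrete Leibniz identity $D_a(fg)=(S_af)(D_ag)+(D_af)g$ and an induction on $m+n$ (though at the last step you do not actually need the full strength of (6) — you only need that if every $D_a(fg)$ has degree $\le m+n-1$ then $fg$ has degree $\le m+n$, which is immediate from the definition); and (6) is the standard two-line argument. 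For (5) you sketch the finite-difference Fa\`a di Bruno expansion and correctly observe that when $r=mn+1$, every set partition of $\{1,\dots,r\}$ either has a block of size $>m$ or has $>n$ blocks, which kills every summand; this \emph{is} the proof in \cite{Dress}, and you are right that the bookkeeping of the shifted base-points for the $g$-differences is the only delicate part. For (7) the Newton expansion $\bar f(i(a)-i(b))=\sum_{k=0}^n(-1)^k(D_b^kf)(a)$ is the correct construction, and your uniqueness argument is correct modulo the observation that the intended uniqueness is among \emph{algebraic} maps (as is clear from how the paper later invokes this item). On (3) you wisely defer the ``equality under linear independence'' clause to the precise formulation in the references; note in fact that the clause as written in the paper is delicate — e.g.\ over $A=E=\Z$, $f(x)=x+1$ and $g(x)=-x+1$ are $E$-linearly independent and both of degree $1$, yet $f+g=2$ has degree $0$ — so whatever the references prove must involve a sharper hypothesis than the literal phrasing here. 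None of this affects the remaining arguments since the later uses of (3) only need the inequality. Overall your proposal is a correct reconstruction at the same or greater level of detail than the paper itself provides.
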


We will use the first six properties of the above proposition
to show that various maps are algebraic,
and we will use the last property to show
that if two algebraic maps are equal on an additive generating set of a ring,
then the uniqueness of the statement implies the maps must agree on the whole ring.
We will often refer to this fact as the Theorem of Dress.

%%WREATH PRODUCT%%
\section{Wreath Product}\label{WreathProd}

Here we recall the important group theoretic construction of the wreath product.
We will use the following notation throughout the rest of the paper.
Let $H$ be a finite group, and let $n$ be some natural number.
The symmetric group $S_n$ acts on $H^n=H\times\cdots\times H$ ($n$ copies)
on the left by

$$\pi(h_1,\dots,h_n)=(h_{\pi^{-1}(1)},\dots,h_{\pi^{-1}(n)})
\hspace{.2cm}\text{ for all }\pi\in S_n\text{ and }h_1,\dots,h_n\in H.$$

\noindent So we can form the semidirect product $H^n\rtimes S_n,$
whose multiplication is given by 

$$((h_1,\dots,h_n),\pi)((k_1,\dots,k_n),\sigma)=((h_1,\dots,h_n)\pi(k_1,\dots,k_n),\pi\sigma)
=((h_1k_{\pi^{-1}(1)},\dots,h_nk_{\pi^{-1}(n)}),\pi\sigma).$$

We will use the notation $(h_1,\dots,h_n;\pi)$ for $((h_1,\dots,h_n),\pi)$,
and write $H\wr S_n:=H^n\rtimes S_n$.
If $1_n$ denotes the identity of $S_n$, and we denote the identity of $H$ by $1_H$,
then the identity of $H\wr S_n$ is $(1_H,\dots,1_H;1_n)$,
and inverses are given by
$(h_1,\dots,h_n;\pi)^{-1}=(h^{-1}_{\pi(1)},\dots,h^{-1}_{\pi(n)};\pi^{-1}).$
We see that $S_n$ and $H^n$ are both embedded in $H\wr S_n$ via the homomorphisms

$$\iota:S_n\longrightarrow H\wr S_n,\quad \pi\mapsto(1_H,\dots,1_H;\pi)
\quad\text{ and }\quad
\kappa:H^n\longrightarrow H\wr S_n,\quad (h_1,\dots,h_n)\mapsto(h_1,\dots,h_n;1_n).$$

%%%TENSOR INDUCTION FOR H-SETS%%%
\section{Tensor Induction for $H$-Sets}\label{TenSet}

Again let $H$ be a finite group and $n$ a natural number.
If $X$ is an $H$-set, then $X^n$ is an $H\wr S_n$-set
via $(h_1,\dots,h_n;\pi)\cdot(x_1,\dots,x_n)=(h_1x_{\pi^{-1}(1)},\dots,h_nx_{\pi^{-1}(n)})$.
Now if $Y$ is an additional $H$-set and $f:X\longrightarrow Y$ is an $H$-map,
we see that the function
$f^n:X^n\longrightarrow Y^n,\quad(x_1,\dots,x_n)\mapsto(f(x_1),\dots,f(x_n))$
is a morphism of $H\wr S_n$-sets.
This gives us a functor $-^n:\prescript{}{H}{\text{set}}\longrightarrow\prescript{}{H\wr S_n}{\text{set}}$.

Now let $G$ be an additional finite group,
and let $U$ be a finite right-free $(G,H)$-biset.
We pick $u_1,\dots,u_n\in U$ such that the ordered set $(u_1,\dots,u_n)$
is a complete set of representatives of the $H$-orbits of $U$.
So here $n=|U/H|$.
Now for $g\in G$, we have $gu_i=u_{\pi(i)}h_i$ for some $\pi\in S_n$ and $h_i\in H$.
Since $U$ is right-free, $\pi$ and the $h_i$ are uniquely determined by $g$.
We can therefore define a function

$$\theta:G\longrightarrow H\wr S_n,\quad
g\mapsto\iota(\pi)\kappa(h_1,\dots,h_n)=(h_{\pi^{-1}(1)},\dots,h_{\pi^{-1}(n)};\pi).$$

\noindent It is easy to check that $\theta$ is a homomorphism,
hence induces a restriction functor
$\text{Res}(\theta):\prescript{}{H\wr S_n}{\text{set}}\longrightarrow\prescript{}{G}{\text{set}}.$
We then define the functor
$s_U:\prescript{}{H}{\text{set}}\longrightarrow\prescript{}{G}{\text{set}}$
as the composition of $\text{Res}(\theta)$ and $-^n$.
So explicitly, if $X$ is an $H$-set, then $s_U(X)=X^n$,
and if $g\in G$ with $gu_i=u_{\pi(i)}h_i$, then

$$g\cdot(x_1,\dots,x_n)=(h_{\pi^{-1}(1)}x_{\pi^{-1}(1)},\dots,h_{\pi^{-1}(n)}x_{\pi^{-1}(n)})$$

\noindent for all $(x_1,\dots,x_n)\in X^n$.
Now of course the functor $s_U$ depends on the choice of $U/H$,
but it is easy to see that different choices of $U/H$ lead to naturally isomorphic functors.

Next we gather a few of the properties of the functor $s_U$.
In Section 4 of \cite{Bouc}, Serge Bouc gives a different - but naturally isomorphic -
definition of $s_U$.
His construction is slightly more general, however, since he does not require that $U$ be right-free.
This additional property will be necessary for later constructions,
so we require it here.
If $V\subseteq U$ such that $v\in V,h\in H$ implies $vh\in V$,
we say that $V$ is $H$-invariant, and denote this by $V\subseteq_HU$.
The action of $G$ on $U$ induces an action on the set of all $H$-invariant subsets of $U$,
and we denote the stabilizer of $V$ in $G$ by $G_V$.
Notice that if $V\subseteq_HU$, then also $U-V\subseteq_HU$, and $G_{U-V}=G_V$,
so both $V$ and $U-V$ are right-free $(G_V,H)$-bisets.
Also, if $V$ is a right-free $(H,K)$-biset for some other finite group $K$,
then $U\times V$ is a right $H$-set via $(u,v)h=(uh,h^{-1}v)$.
We denote by $U\times_HV$ the set of $H$-orbits of $U\times V$ under this action.
Such an orbit will be denoted by $(u,_Hv)$ for $u\in U$ and $v\in V$.
Then $U\times_HV$ is naturally a right-free $(G,K$)-biset via $g(u,_Hv)k=(gu,_Hvk)$.
Proofs of the properties to follow are given in Bouc's paper, so we omit them here.\\

\begin{proposition}\label{sUprops}
Let $U,U'$ be right-free $(G,H)$-bisets, let $V$ be a right-free $(K,G)$-biset,
and let $X,Y$ be $H$-sets.
Then the following hold:
\begin{enumerate}
\item $s_U(\bullet)\cong\bullet$ as $G$-sets, where $\bullet$ denotes a one-point set.
\item $s_U(X)\cong s_{U'}(X)$ as $G$-sets whenever $U\cong U'$ as $(G,H)$-bisets,
where the isomorphism is natural in $X$.
\item $s_U(X\times Y)\cong s_U(X)\times s_U(Y)$ as $G$-sets,
where the isomorphism is natural in both $X$ and $Y$.
\item $s_{U\sqcup U'}(X)\cong s_U(X)\times s_{U'}(X)$ as $G$-sets,
where the isomorphism is natural in $X$.
\item $s_{V\times_GU}(X)\cong(s_V\circ s_U)(X)$ as $K$-sets,
where the isomorphism is natural in $X$.
\item $s_U(X\sqcup Y)\cong\displaystyle\coprod_{V\in G\backslash\{V\subseteq_HU\}}\mathrm{Ind}_{G_V}^G(s_V(X)\times s_{U-V}(Y))$ as $G$-sets,
where the isomorphism is natural in both $X$ and $Y$.
\end{enumerate}
\end{proposition}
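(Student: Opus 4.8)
The plan is to prove each item by unwinding the definition $s_U=\mathrm{Res}(\theta)\circ(-)^n$ and checking that an evident coordinate-wise bijection of underlying sets is equivariant for the relevant group. Throughout, fix the orbit representatives $u_1,\dots,u_n$ of $U/H$, so that for $g\in G$ we have $gu_i=u_{\pi_g(i)}h_i(g)$ with $\pi_g\in S_n$ and $h_i(g)\in H$, and recall that then $s_U(X)=X^n$ with $g\cdot(x_i)_i=\bigl(h_{\pi_g^{-1}(j)}(g)\,x_{\pi_g^{-1}(j)}\bigr)_j$; the remark preceding the proposition lets me change these choices up to natural isomorphism whenever convenient.

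Items (1)--(4) are essentially formal. For (1), $s_U(\bullet)=\bullet^n=\bullet$ carries the only possible $G$-action. For (3), the canonical bijection $(X\times Y)^n\to X^n\times Y^n$ intertwines the two actions because the $H$-component occurring at slot $j$ acts diagonally on $X\times Y$. For (4), choosing the orbit representatives of $(U\sqcup U')/H$ to be the union of those chosen for $U/H$ and $U'/H$ makes every $\pi_g$ block-diagonal (the $G$-action on $U\sqcup U'$ preserves the two summands), so the bijection $X^{n+m}\to X^n\times X^m$ is $G$-equivariant. For (2), a $(G,H)$-biset isomorphism $U\to U'$ sends the chosen orbit representatives of $U/H$ to orbit representatives of $U'/H$ and turns the two homomorphisms $\theta,\theta'$ into the same map $G\to H\wr S_n$, so $s_U$ and $s_{U'}$ agree for these choices; the independence-of-choices remark then upgrades this to a natural isomorphism. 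In every case naturality is visible because the bijections are defined slotwise.

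For (5) I would first record the bookkeeping: writing $v_1,\dots,v_r$ for representatives of the right $G$-orbits of $V$ and using that $V$ is right-free, every $H$-orbit of $V\times_G U$ has a unique representative $(v_a,{}_G u_i)$, whence $|V\times_G U/H|=rn=|V/G|\cdot|U/H|$ and both $s_{V\times_G U}(X)$ and $(s_V\circ s_U)(X)=s_V(X^n)$ are copies of $X^{rn}$, matched by the ``uncurrying'' bijection $X^{rn}\cong(X^n)^r$ indexed by pairs $(a,i)$. To check this is $K$-equivariant, take $k\in K$, write $kv_a=v_{\rho(a)}g_a$ with $\rho\in S_r$, $g_a\in G$, and $g_a u_i=u_{\pi_a(i)}h_{a,i}$. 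Then $k(v_a,{}_G u_i)=(v_{\rho(a)},{}_G u_{\pi_a(i)}h_{a,i})$, so $\theta_{V\times_G U}(k)$ moves slot $(a,i)$ to $(\rho(a),\pi_a(i))$ with $H$-component $h_{a,i}$; on the other side $s_V$ applied to the $G$-set $X^n$ uses exactly the data $v_a,g_a$ to permute and twist the $r$ blocks, while the $G$-action on $X^n=s_U(X)$ supplies $\pi_a$ and $h_{a,i}$ inside the $a$-th block, and the two descriptions agree slot by slot. Naturality in $X$ is again slotwise.

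Item (6) is the substantive one, and I expect the main obstacle to be organizing (rather than discovering) the argument. First, $H$-invariant subsets $V\subseteq_H U$ correspond bijectively to subsets $I\subseteq\{1,\dots,n\}$ via $I\mapsto V_I:=\bigsqcup_{i\in I}u_iH$, and under this correspondence $g\cdot V_I=V_{\pi_g(I)}$; moreover $V_I$ and $U-V_I=V_{I^c}$ are right-free $(G_{V_I},H)$-bisets whose chosen orbit representatives are $\{u_i:i\in I\}$ and $\{u_i:i\notin I\}$. Define $\Phi:(X\sqcup Y)^n\to\{V:V\subseteq_H U\}$ by $\Phi\bigl((z_i)_i\bigr)=V_I$ with $I=\{i:z_i\in X\}$. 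Since each $H$-component sends $X$ into $X$ and $Y$ into $Y$, $\Phi$ is a morphism of $G$-sets; hence $(X\sqcup Y)^n$ splits as the disjoint union, over $G$-orbit representatives $V$ of $H$-invariant subsets, of the $G$-sets spanned by the orbit of each fibre, and each such piece is $\mathrm{Ind}_{G_V}^G\Phi^{-1}(V)$ by the standard description of a $G$-set over a transitive one. Finally, for fixed $V=V_I$ the fibre $\Phi^{-1}(V)$ is the set of tuples with $z_i\in X$ for $i\in I$ and $z_i\in Y$ otherwise; separating the two groups of coordinates yields a bijection $\Phi^{-1}(V)\cong X^{|I|}\times Y^{|I^c|}$, which is $G_V$-equivariant because for $g\in G_V$ the permutation $\pi_g$ preserves $I$ and $I^c$ and its restrictions, together with the matching $H$-components, are precisely the data defining the $G_V$-actions on $s_V(X)$ and $s_{U-V}(Y)$. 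Naturality in both $X$ and $Y$ is once more slotwise. (Alternatively, one may cite Section~4 of \cite{Bouc}: transporting along the natural isomorphism between Bouc's construction and $s_U$ converts his versions of (1)--(6) into the statements above.)
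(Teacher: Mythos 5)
Your argument is sound throughout, but it is genuinely different from what the paper does: the paper proves nothing here, simply deferring to Section~4 of Bouc's memoir (where a naturally isomorphic construction of $s_U$ is studied) and letting the reader transport Bouc's statements along that natural isomorphism. You instead verify all six items directly from the definition $s_U=\mathrm{Res}(\theta)\circ(-)^n$, which makes the proposition self-contained and avoids having to set up and check the natural isomorphism between the two constructions. Your treatment of (5) is the one place where the extra hypothesis that the bisets are right-free is really doing work (so that $|V\times_GU/H|=|V/G|\cdot|U/H|$ and the pairs $(v_a,{}_Gu_i)$ are distinct orbit representatives), and you correctly invoke it; your treatment of (6) via the $G$-equivariant indicator map $\Phi$ to the poset of $H$-invariant subsets, followed by the standard description of a $G$-set over a transitive one as an induced set, is a clean way to organize the decomposition that Bouc obtains by other means. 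The trade-off is length: citing Bouc is shorter and also lets later sections (e.g.\ Proposition~\ref{tUprops}) reuse the same reference, whereas your route would require an analogous hands-on verification there. Either way, what you wrote is a complete and correct proof.
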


Now if we let $B^+(H)\subseteq B(H)$
denote the semiring generated by the isomorphism classes of $H$-sets,
then $s_U$ induces a function $B(U):B^+(H)\longrightarrow B(G)$
that sends $[X],$ the isomorphism class of an $H$-set $X$,
to $[s_U(X)],$ the isomorphism class of $s_U(X)$.
By the functoriality of $s_U$ this function is well-defined.
And since different choices of $U/H$ give naturally isomorphic functors,
this function does not depend on such a choice.
Now property 3 of Proposition \ref{sUprops} shows that this function is multiplicative on $B^+(H)$.
We will show this function is algebraic of degree $n=|U/H|$.\\

\begin{lemma}\label{sUalgebraic}
Let $U$ be a right-free $(G,H)$-biset.
The function $B(U):B^+(H)\longrightarrow B(G), [X]\mapsto[s_U(X)]$ is algebraic of degree $|U/H|$.
\end{lemma}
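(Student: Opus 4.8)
The plan is to induct on $n = |U/H|$ using property 6 of Proposition \ref{algprops} (the Dress reduction criterion for $D_{a_0}f$). For the base case $n=0$, the biset $U$ is empty, $s_U(X)$ is always a one-point set by property 1 of Proposition \ref{sUprops}, so $B(U)$ is the constant function $[\bullet]$, which is algebraic of degree $0$. For the inductive step, fix an $H$-set $X_0$ (to serve as $a_0$); I would like to show that $D_{[X_0]}B(U)$ is algebraic of degree $n-1$ and that $D_{[X]}B(U)$ has degree $\le n-1$ for every $H$-set $X$, so that property 6 of Proposition \ref{algprops} yields that $B(U)$ is algebraic of degree $n$.

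The key computational input is property 6 of Proposition \ref{sUprops}, which expands $s_U(X \sqcup Y)$ as
\[
s_U(X\sqcup Y)\cong\coprod_{V\in G\backslash\{V\subseteq_H U\}}\mathrm{Ind}_{G_V}^G\bigl(s_V(X)\times s_{U-V}(Y)\bigr).
\]
Applying $B(-)$ and separating off the two extreme terms $V=\emptyset$ (giving $s_U(Y)$) and $V=U$ (giving $s_U(X)$), one gets, for $H$-sets $X$ and $Y$,
\[
B(U)([X]+[Y]) = B(U)([X]) + B(U)([Y]) + \sum_{\substack{V\in G\backslash\{V\subseteq_H U\}\\ \emptyset\ne V\ne U}} \mathrm{Ind}_{G_V}^G\bigl([s_V(X)]\cdot[s_{U-V}(Y)]\bigr).
\]
Reading this as $D_{[Y]}B(U)$ applied to $[X]$: the term $B(U)([Y])$ is a constant (independent of $X$), the term $B(U)([X])$ contributes a copy of $B(U)$ itself which I must handle carefully, and each remaining summand factors through $[s_V(X)]$ for a proper nonempty $H$-invariant $V\subsetneq U$, i.e.\ through $B(V):B^+(H)\to B(G_V)$ with $|V/H| < n$, followed by the fixed linear map $\mathrm{Ind}_{G_V}^G$ and multiplication by the constant $[s_{U-V}(Y)]$. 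By the inductive hypothesis each $B(V)$ is algebraic of degree $|V/H| \le n-1$; then parts 2, 4, 5, and 3 of Proposition \ref{algprops} show each summand is algebraic of degree $\le n-1$ in $[X]$, and part 3 shows the whole sum is. The subtlety is the term $B(U)([X])$: I would instead group the expansion as $D_{[Y]}B(U)(x) = c_Y + \big(\text{sum over proper }V\big)$, where the "$B(U)([X])$" piece is absorbed because in $D_{[Y]}B(U)(x) = B(U)(x+[Y]) - B(U)(x)$ the leading $B(U)(x)$ cancels — so actually $D_{[Y]}B(U)$ equals the constant $B(U)([Y])$ plus the sum over proper nonempty $V$, and hence is algebraic of degree $\le n-1$, with $[Y]=[X_0]$ giving equality $n-1$ once we check the sum is genuinely of degree $n-1$.

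The main obstacle is verifying that $D_{[X_0]}B(U)$ has degree \emph{exactly} $n-1$, not merely $\le n-1$, for at least one choice of $X_0$ — this is what property 6 of Proposition \ref{algprops} requires. Here I would take $X_0 = \bullet$ a one-point $H$-set, so that $s_{U-V}(\bullet)\cong\bullet$ by property 1 of Proposition \ref{sUprops}, collapsing the sum to $\sum_{V} \mathrm{Ind}_{G_V}^G([s_V(X)])$ over proper nonempty orbit representatives $V$; among these, taking $V$ with $|V/H| = n-1$ (e.g.\ $U$ minus one $H$-orbit, if such a $V$ is $G$-stable — otherwise one argues via the biggest available $V$ and a linear-independence/rank argument on the ghost ring $\tilde B(G)$, using that $\phi_{G_V}$ is injective to see the relevant leading terms cannot cancel) gives a summand of degree exactly $n-1$ that is linearly independent from the lower-degree summands, so part 3 of Proposition \ref{algprops} forces equality. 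Once the exact degree of $D_{[\bullet]}B(U)$ is pinned down and the uniform bound $\deg D_{[X]}B(U)\le n-1$ is in hand, property 6 of Proposition \ref{algprops} closes the induction and gives $\deg B(U) = n = |U/H|$.
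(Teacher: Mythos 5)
Your proposal follows essentially the same route as the paper: induction on $n=|U/H|$, expansion of $D_{[Y]}B(U)$ via property 6 of Proposition \ref{sUprops} (with the $V=U$ contribution canceling the subtracted $[s_U(X)]$), specialization to $Y=\bullet$ to collapse the sum into $\sum_V \mathrm{Ind}_{G_V}^G\circ B(V)$ over proper $H$-invariant $V$, and then property 6 of Proposition \ref{algprops} to close the induction once the exact degree of $D_{[\bullet]}B(U)$ is pinned down by a linear-independence argument. The only places you diverge are cosmetic: you start your induction at $n=0$ rather than $n=1$, and you pull out the $V=\emptyset$ term explicitly whereas the paper leaves it inside the final sum.

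One small point worth clearing up: your worry about whether ``$U$ minus one $H$-orbit is $G$-stable'' is a red herring. The sum in property 6 of Proposition \ref{sUprops} is indexed by \emph{$G$-orbits} of $H$-invariant subsets $V\subseteq_HU$, not by $G$-fixed ones, and the term $\mathrm{Ind}_{G_V}^G(\cdots)$ is precisely what accounts for $G_V$ being a proper subgroup of $G$. Any $V$ consisting of $n-1$ of the $H$-orbits of $U$ is $H$-invariant of size $n-1$ and hence appears (via a chosen orbit representative) in the sum, regardless of how $G$ permutes it. So your fallback ``otherwise one argues via the biggest available $V$'' is not a separate case --- a maximal proper $V$ of size $|V/H|=n-1$ always contributes a summand $\mathrm{Ind}_{G_V}^G\circ B(V)$ of degree $n-1$, and the linear-independence observation you and the paper both appeal to finishes the job via part 3 of Proposition \ref{algprops}.
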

\begin{proof}
We prove this lemma by induction on $n=|U/H|$.
So first suppose that $n=1$.
Then fix $u\in U$.
For $g\in G$, we have $gu=uh$ for some unique $h\in H$.
Then $\theta:G\longrightarrow H=H\wr S_1, g\mapsto h$ is the homomorphism defining $s_U$.
And since $n=1,$ the functor $s_U$ is just the restriction functor along $\theta$.
Hence $B(U)$ is just the restriction map.
And since restriction is additive, we see that $B(U)$ is algebraic of degree $\leq1$.
But $B(U)$ is nonconstant, so it must be of degree $1=n$.

Now assume for some fixed $k\in\N$,
that we have shown that if $V$ is a right-free biset with $|V/H|\leq k$,
then $B(V)$ is algebraic of degree $|V/H|$.
Then suppose that $U$ is a right-free $(G,H)$-biset with $n=|U/H|=k+1$.
Let us fix $[X],[Y]\in B^+(H)$.
We then have the following:

\begin{align*}
D_{[Y]}B(U)([X])
&=B(U)([X]+[Y])-B(U)([X])\\
&=B(U)([X\sqcup Y])-B(U)([X])\\
&=[s_U(X\sqcup Y)]-[s_U(X)]\\
&=[\coprod_{V\in G\backslash\{V\subseteq_HU\}}\text{Ind}_{G_V}^G(s_V(X)\times s_{U-V}(Y))]-[s_U(X)]\\
&=\sum_{V\in G\backslash\{V\subseteq_HU\}}[\text{Ind}_{G_V}^G(s_V(X)\times s_{U-V}(Y))]-[s_U(X)]\\
&=[\text{Ind}_{G_U}^G(s_U(X)\times s_{U-U}(Y))]+\sum_{V\in G\backslash\{V\subsetneq_HU\}}[\text{Ind}_{G_V}^G(s_V(X)\times s_{U-V}(Y))]-[s_U(X)]\\
&=[s_U(X)]-[s_U(X)]+\sum_{V\in G\backslash\{V\subsetneq_HU\}}[\text{Ind}_{G_V}^G(s_V(X)\times s_{U-V}(Y))]\\
&=\sum_{V\in G\backslash\{V\subsetneq_HU\}}\text{Ind}_{G_V}^G(B(V)([X])B(U-V)([Y]))
\end{align*}

Now for every $V\subseteq_HU$ such that $V\neq U,$ we must have $|V/H|\leq k.$
So by the induction hypothesis, $B(V)$ is algebraic of degree $|V/H|$.
Now $B(U-V)([Y])$ is constant in terms of $[X]$,
so the function $B(V)B(U-V)([Y])$ is algebraic of degree $\leq|V/H|$.
And since induction is additive (algebraic of degree 1),
composing with $\text{Ind}_{G_V}^G$ again gives an algebraic function of degree $\leq|V/H|$.
So $D_{[Y]}B(U)$ is a sum of functions of degrees $\leq k$ for all $[Y]\in B^+(H)$.
When $Y\cong\bullet,$ a one-point $H$-set, we see that
$D_{\bullet}B(U)([X])=\sum_{V\in G\backslash\{V\subsetneq_HU\}}\text{Ind}_{G_V}^G(B(V)([X])).$
After choosing a set of representatives of $G\backslash\{V\subseteq_HU\}$,
it is clear that the functions $\textrm{Ind}_{G_V}^G\circ B(V)$ are linearly independent over $B(G)$.
And for any maximal $V\subsetneq_HU$,
the function $\textrm{Ind}_{G_V}^G\circ B(V)$ is algebraic of degree $k$
by the induction hypothesis.
Hence $D_{\bullet}B(U)$ is algebraic of degree $k$ by Property 3 of Proposition \ref{algprops}.
Finally $B(U)$ is algebraic of degree $k+1=n$ by Property 6 of the proposition.
\end{proof}

Now by applying the theorem of Dress,
we see that $B(U)$ extends to a function $B(H)\longrightarrow B(G)$
that is still multiplicative and algebraic of degree $|U/H|$.
We will again denote this function by $B(U)$ and refer to it as \emph{tensor induction} by $U$.
Mostly we are interested in the case where $H$ is a subgroup of $G$
and $U$ is just $G$ as a right-free $(G,H)$-biset with multiplication from $G$ on the left
and multiplication from $H$ on the right.
Then the map $B(H)\longrightarrow B(G)$ is the multiplicative induction map,
which is algebraic of degree $[G:H]$.

Before moving on, we first recall (see Section 3b of \cite{Yoshida})
how to extend $B(U)$ to a multiplicative function between ghost rings.
Recall that if $S\leq G$ and $u\in U$, then $S^u=\varphi_u(S\cap\prescript{u}{}{H})\leq H$.
So we can define the function

$$\tilde B(U):\tilde{B}(H)\longrightarrow\tilde{B}(G),\quad
(n_T)_{T\in\mathscr{S}(H)}
\mapsto\left(\prod_{u\in S\backslash U/H}n_{S^u}\right)_{S\in\mathscr{S}(G)}.$$

\noindent We see clearly that $\tilde B(U)$ is multiplicative.
We can also see that $\tilde B(U)$ is algebraic of degree
$\max\{|S\backslash U/H|:S\in\mathscr{S}(G)\}=|\{1_G\}\backslash U/H|=|U/H|$, as is $B(U)$.
We also have $\tilde B(U)\circ\phi_H=\phi_G\circ B(U)$,
and $\tilde B(U)$ is the unique multiplicative function with this property.
In other words, $\tilde B(U)$ is the multiplicative extension of $B(U)$ to ghost rings.

%%TENSOR INDUCTION OF MODULES%%
\section{Tensor Induction of Modules}\label{TenMod}

For this section, we let $R$ denote any commutative ring.
We call an $RH$-module $M$ a \emph{representation module} if $M$ is finitely generated
and $M$ is free as an $R$-module.
Just as we studied the tensor induction of $H$-sets in the previous section,
we here study the tensor induction of representation $RH$-modules.
One construction is defined by Bouc in \cite{Bouc}.
Here we give a different construction that still results in isomorphic modules.
Now let $n\in\N$.
If $M$ is a representation $RH$-module,
then we see that $M^{\otimes n}:=M\otimes_R\cdots\otimes_RM$ ($n$ copies)
is a representation $R[H\wr S_n]$-module with action given by

$$(h_1,\dots,h_n;\pi)\cdot(m_1\otimes\cdots\otimes m_n)
=(h_1m_{\pi^{-1}(1)})\otimes\cdots\otimes(h_nm_{\pi^{-1}(n)}).$$

This then defines a functor $-^{\otimes n}$ from the category of representation $RH$-modules
to the category of representation $R[H\wr S_n]$-modules.
Now if $U$ is a right-free $(G,H)$-biset,
then after picking $(u_1,\dots,u_n)$, an ordered set of representatives of $U/H$,
we get a group homomorphism $\theta:G\longrightarrow H\wr S_n$ as before.
Then $\theta$ induces a restriction functor, and after composing with $-^{\otimes n}$,
we get a functor $t_U$ from the category of representation $RH$-modules
to the category of representation $RG$-modules.
Explicitly, $t_U(M)=M^{\otimes n}=M\otimes_R\cdots\otimes_RM$,
and if $g\in G$ with $gu_i=u_{\pi(i)}h_i$ for some $\pi\in S_n, h_i\in H$, then

$$g\cdot(m_1\otimes\cdots\otimes m_n)
=(h_{\pi^{-1}(1)}m_{\pi^{-1}(1)})\otimes\cdots\otimes(h_{\pi^{-1}(n)}m_{\pi^{-1}(n)}).$$

Of course $t_U$ depends on the choice of representatives of $U/H$,
but as before, we can see that different choices of representatives give naturally isomorphic functors.
If $M$ and $N$ are two representation $RH$-modules,
then both $M\otimes_RN$ and $M\oplus N$ are also representation $RH$-modules.
Also $R$ is a representation $RH$-module with trivial $H$-action.
Then for any representation $RH$-module $M$,
its dual module $M^\circ$ is also a representation $RH$-module.
If $X$ is a finite $H$-set,
then the permutation module $RX$ is a representation $RH$-module.
As an analog to Proposition \ref{sUprops}, we have the following:\\

\begin{proposition}\label{tUprops}
Let $U,U'$ be right-free $(G,H)$-bisets, let $V$ be a right-free $(K,G)$-biset,
let $M$ and $N$ be representation $RH$-modules, and let $X$ be a finite $H$-set.
Then the following hold:
\begin{enumerate}
\item $t_U(R)\cong R$, where $R$ is the trivial module.
\item $t_U(M)\cong t_{U'}(M)$ as $RG$-modules whenever $U\cong U'$,
where the isomorphism is natural in $M$.
\item $t_U(M\otimes_RN)\cong t_U(M)\otimes_Rt_U(N)$ as $RG$-modules,
where the isomorphism is natural in both $M$ and $N$.
\item $t_{U\sqcup U'}(M)\cong t_U(M)\otimes_Rt_{U'}(M)$ as $RG$-modules,
where the isomorphism is natural in $M$.
\item $t_{V\times_GU}(M)\cong(t_V\circ t_U)(M)$ as $RK$-modules,
where the isomorphism is natural in $M$.
\item $t_U(M\oplus N)\cong\displaystyle\bigoplus_{V\in G\backslash\{V\subseteq_HU\}}\mathrm{Ind}_{G_V}^G(t_V(M)\otimes_Rt_{U-V}(N))$ as $RG$-modules,
where the isomorphism is natural in both $M$ and $N$.
\item $t_U(RX)\cong Rs_U(X)$,
where the isomorphism is natural in $X$.
\item $t_U(M^\circ)\cong t_U(M)^\circ$ as $RG$-modules,
where the isomorphism is natural in $M$.
\end{enumerate}
\end{proposition}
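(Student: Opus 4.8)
The plan is to treat the eight assertions in three groups, all exploiting that by construction $t_U$ is the tensor-power functor $-^{\otimes n}$ followed by restriction along $\theta\colon G\to H\wr S_n$, so that it suffices to produce the relevant natural isomorphism already at the level of $R[H\wr S_n]$-modules and then restrict. For parts (1), (3), (7) and (8) I would first write down the evident natural isomorphism of underlying $R$-modules and then check it intertwines the two $H\wr S_n$-actions; restriction along $\theta$ then gives the claimed isomorphism of $RG$-modules, and naturality is inherited. Concretely, $R^{\otimes n}=R$ with trivial action gives (1); the shuffle isomorphism $(M\otimes_R N)^{\otimes n}\cong M^{\otimes n}\otimes_R N^{\otimes n}$ sending $\bigotimes_i(m_i\otimes n_i)$ to $(\bigotimes_i m_i)\otimes(\bigotimes_i n_i)$ is visibly $H\wr S_n$-equivariant, giving (3); the identification $(RX)^{\otimes n}\cong R(X^n)$ of permutation modules (matching the basis $X^n$ on both sides) is $H\wr S_n$-equivariant because both sides linearize the $H\wr S_n$-set $X^n$, and restricting along $\theta$ turns this into $t_U(RX)\cong Rs_U(X)$, which is (7); and the standard isomorphism $(M^{\otimes n})^\circ\cong(M^\circ)^{\otimes n}$ for finitely generated free $R$-modules respects the $H\wr S_n$-actions (the $S_n$-part permutes the dual factors, the $H^n$-part acts through inverses exactly as in the definition of the dual module), which after restriction is (8).

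Part (2) I would deduce formally: a biset isomorphism $f\colon U\to U'$ carries a chosen transversal $(u_1,\dots,u_n)$ of $U/H$ to the transversal $(f(u_1),\dots,f(u_n))$ of $U'/H$, and for these choices the defining homomorphisms $\theta$ and $\theta'$ literally coincide, so $t_U=t_{U'}$ as functors; combining this with the already-noted fact that different transversals yield naturally isomorphic functors gives (2).

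Parts (4), (5), (6) are the substantive ones, and each is handled by choosing transversals for the biset built from $U$ (and $U'$, or $V$) so that the underlying $R$-module of the left-hand functor becomes literally the same tensor power as the right-hand side, leaving only the identification of actions. For (4): the concatenation of transversals of $U/H$ and $U'/H$ is a transversal of $(U\sqcup U')/H$, so both sides are $M^{\otimes(n+m)}$ as $R$-modules, and since $g\in G$ permutes the $H$-orbits inside $U$ and inside $U'$ separately, $\theta_{U\sqcup U'}(g)$ is block diagonal and produces exactly the action on $t_U(M)\otimes_R t_{U'}(M)$. For (5): choosing transversals $(v_1,\dots,v_m)$ of $V/G$ and $(u_1,\dots,u_n)$ of $U/H$, the elements $(v_j,_Gu_i)$ form a transversal of $(V\times_GU)/H$, so both $t_{V\times_GU}(M)$ and $(t_V\circ t_U)(M)$ are $M^{\otimes mn}$ as $R$-modules; one then checks that the homomorphism $K\to H\wr S_{mn}$ defining the left side agrees, under the bijection $\{1,\dots,m\}\times\{1,\dots,n\}\leftrightarrow\{1,\dots,mn\}$, with the composite action obtained from $\theta_V\colon K\to G\wr S_m$ followed by applying $-^{\otimes m}$ to the $G$-action $\theta_U\colon G\to H\wr S_n$ on $M^{\otimes n}$. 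For (6): decompose $(M\oplus N)^{\otimes n}\cong\bigoplus_{\epsilon}\bigotimes_i M_{\epsilon(i)}$ over maps $\epsilon\colon\{1,\dots,n\}\to\{M,N\}$; identifying $\epsilon$ with the $H$-invariant subset $V_\epsilon=\bigsqcup_{\epsilon(i)=M}u_iH\subseteq_HU$, the group $G$ permutes the index set through its action on $\{V\subseteq_HU\}$, the $G$-orbit of $V$ contributes $\mathrm{Ind}_{G_V}^G$ of the summand at $V$, and that summand is $t_V(M)\otimes_R t_{U-V}(N)$ as a $G_V$-module because $V$ and $U-V$ are right-free $(G_V,H)$-bisets whose chosen representatives are the sub-transversals $\{u_i:\epsilon(i)=M\}$ and $\{u_i:\epsilon(i)=N\}$. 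These three arguments run exactly parallel to Proposition \ref{sUprops}.

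I expect part (5) to be the main obstacle: verifying that the two homomorphisms into $H\wr S_{mn}$ agree requires careful bookkeeping of how the twisting elements of $G$ produced by $\theta_V$ are fed into $\theta_U$, and it is precisely the place where an off-by-a-conjugation or a stray inverse could creep in. As a fallback, since the $t_U$ defined here is naturally isomorphic to Bouc's tensor induction functor from Section 4 of \cite{Bouc}, parts (1)--(6) may instead be imported from the corresponding statements there, leaving only (7) and (8) to verify directly; I would nonetheless record the explicit transversal argument for (4)--(6), since it keeps the section self-contained.
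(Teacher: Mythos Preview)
Your proposal is correct. The paper, however, takes a much shorter route: it simply cites Bouc \cite{Bouc} for properties (1)--(7) and gives a direct argument only for (8). For that last property the paper's proof coincides with yours in substance --- it writes down a basis $\{m_1,\dots,m_k\}$ of $M$, the dual basis $\{f_1,\dots,f_k\}$, and sends $f_{i_1}\otimes\cdots\otimes f_{i_n}$ to the element of $t_U(M)^\circ$ dual to $m_{i_1}\otimes\cdots\otimes m_{i_n}$, checking $G$-equivariance; this is exactly your ``standard isomorphism $(M^{\otimes n})^\circ\cong(M^\circ)^{\otimes n}$'' unpacked in coordinates. So the genuine difference is only in (1)--(7): you spell out the transversal/wreath-product arguments explicitly, whereas the paper imports them wholesale from Bouc. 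Your approach buys self-containment (and the transversal bookkeeping you worry about in (5) does go through), at the cost of length; the paper's buys brevity and avoids repeating arguments already in the literature. Your own fallback suggestion --- cite Bouc for (1)--(6) and handle (7)--(8) directly --- is essentially what the paper does, except that it also defers (7) to Bouc.
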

\begin{proof}
Proofs of the first seven properties are discussed in \cite{Bouc},
so here we only prove the last property.
Let $M$ be a representation $RH$-module,
and let $\{m_1,\dots,m_k\}$ be an $R$-basis of $M$.
Then $M^\circ$ has a dual $R$-basis $\{f_1,\dots,f_k\}$.
That is, $f_i\in\text{Hom}_R(M,R)$ such that $f_i(m_j)=\delta_{i,j}$
where $\delta_{i,j}=1_R$ if $i=j$ and $\delta_{i,j}=0_R$ if $i\neq j$.
Now also $\{m_{i_1}\otimes\cdots\otimes m_{i_n}:(i_1,\dots,i_n)\in\{1,\dots,k\}^n\}$
is an $R$-basis of $t_U(M)$.
For $(i_1,\dots,i_n)\in\{1,\dots,k\}^n$, let $f_{(i_1,\dots,i_n)}\in t_U(M)^\circ$
denote the dual element to $m_{i_1}\otimes\cdots\otimes m_{i_n}$.
And since $\{f_1,\dots,f_k\}$ is an $R$-basis of $M^\circ$,
then $\{f_{i_1}\otimes\cdots\otimes f_{i_n}:(i_1,\dots,i_n)\in\{1,\dots,k\}^n\}$
is an $R$-basis of $t_U(M^\circ)$.
The map $t_U(M^\circ)\longrightarrow t_U(M)^\circ$ that sends
$f_{i_1}\otimes\cdots\otimes f_{i_n}$ to $f_{(i_1,\dots,i_n)}$
is then an isomorphism of $R$-modules.
It is easy to check that this map preserves the $G$-action on both sides,
hence we conclude this is an isomorphism of $RG$-modules.
\end{proof}

%%%TENSOR INDUCTION FUNCTIONS%%%
\section{Tensor Induction Functions}\label{TenFun}
Here we show how tensor induction is used to define functions
between the other various representation rings and their ghost rings
by first starting with the trivial source ring.
So suppose $U$ is a right-free $(G,H)$-biset for two finite groups $G$ and $H$.
Let us fix a $p$-modular system $(K,\mathcal O,F)$ that is large enough for both $G$ and $H$.
Every finitely generated $FH$-module is automatically free over $F$,
hence every finitely generated $FH$-module is a representation module.
So we can apply $t_U$ to any finitely generated $FH$-module.
If $M$ is a trivial source $FH$-module,
then $M\mid FX$ for some finite $H$-set $X$.
Thus by property 6 of Proposition \ref{tUprops}, we see that $t_U(M)\mid t_U(FX)$.
And then by property 7, $t_U(FX)\cong Fs_U(X)$.
Hence $t_U(M)\mid Fs_U(X)$, proving that $t_U(M)$ is then a trivial source $FG$-module.
This shows that we have a function $T_F(U):T^+_F(H)\longrightarrow T_F(G), [M]\mapsto[t_U(M)],$
where $T_F^+(H)\subseteq T_F(H)$ denotes the semiring consisting of the classes
of trivial source $FH$-modules.
Since $[M]=[N]$ in $T_F(H)$ iff $M\cong N$ as $FH$-modules, this function is well-defined.
Properties 1 and 3 of Proposition \ref{tUprops} shows this function is multiplicative.
And as with $B(U)$, property 6 can be used to show that $T_F(U)$ is algebraic of degree $|U/H|$.
Then applying the theorem of Dress,
we get a function $T_F(H)\longrightarrow T_F(G)$, which we also denote by $T_F(U)$,
that is multiplicative and algebraic of degree $|U/H|$
such that $T_F(U)([M])=[t_U(M)]$ for every trivial source module $M$.
Property 7 implies that $T_F(U)\circ l_H=l_G\circ B(U)$.
Finally, property 8 of Proposition \ref{tUprops} can also be combined with the Theorem of Dress
to show that $T_F(U)$ preserves duals,
that is, $T_F(U)(a^\circ)=T_F(U)(a)^\circ$ for all $a\in T_F(H)$.

In the same way, we have a multiplicative function
$T_{\mathcal O}(U):T_{\mathcal O}(H)\longrightarrow T_{\mathcal O}(G)$,
which is algebraic of degree $|U/H|$ and also preserves duals.
Here, we also see that $T_{\mathcal O}(U)\circ l_H=l_G\circ B(U)$.
It is also clear that the isomorphisms
$T_{\mathcal O}(H)\cong T_F(H)$ and $T_{\mathcal O}(G)\cong T_F(G)$
commute with the maps $T_{\mathcal O}(U)$ and $T_F(U)$.

Next we describe in detail an extension of $T_{\mathcal O}(U)$ to the appropriate ghost rings.
So we want to define a multiplicative and algebraic function
$\tilde T_{\mathcal O}(U):\tilde T_{\mathcal O}(H)\longrightarrow\tilde T_{\mathcal O}(G)$
that completes the diagrams below to commutative ones:

\begin{center}
\begin{tabular}{cc}
\begin{tikzpicture}
\node (UL) {$T_{\mathcal O}(H)$};
\node (UR) [node distance=3.5cm, right of=UL] {$T_{\mathcal O}(G)$};
\node (LL) [below of=UL] {$\tilde{T}_{\mathcal O}(H)$};
\node (LR) [below of=UR] {$\tilde{T}_{\mathcal O}(G)$};
\draw [->] (UL) to node {$T_{\mathcal O}(U)$} (UR);
\draw [->] (LL) to node [swap] {$\tilde T_{\mathcal O}(U)$} (LR);
\draw [->] (UL) to node [swap] {$\tau_H$} (LL);
\draw [->] (UR) to node {$\tau_G$} (LR);
\end{tikzpicture}
&
\begin{tikzpicture}
\node (UL) {$\tilde B(H)$};
\node (UR) [node distance=3.5cm, right of=UL] {$\tilde B(G)$};
\node (LL) [below of=UL] {$\tilde{T}_{\mathcal O}(H)$};
\node (LR) [below of=UR] {$\tilde{T}_{\mathcal O}(G)$};
\draw [->] (UL) to node {$\tilde B(U)$} (UR);
\draw [->] (LL) to node [swap] {$\tilde T_{\mathcal O}(U)$} (LR);
\draw [->] (UL) to node [swap] {$\tilde l_H$} (LL);
\draw [->] (UR) to node {$\tilde l_G$} (LR);
\end{tikzpicture}
\end{tabular}
\end{center}

First we will show how, given a pair $(E,c)\in\mathscr{T}_p(G)$ and $u\in U$,
to define a corresponding element of $\mathscr{T}_p(H)$.
Since $E$ is $p$-hypo-elementary,
we see that $E\cap\prescript{u}{}{H}$ must also be $p$-hypo-elementary
with $O_p(E\cap\prescript{u}{}{H})=O_p(E)\cap\prescript{u}{}{H}$.
Recall that we have the map $\varphi_u$ which maps $E\cap\prescript{u}{}{H}$ onto $E^u$.
Also $\varphi_u$ maps $O_p(E)\cap\prescript{u}{}{H}$ onto $O_p(E)^u=O_p(E^u)$.
So we can define the surjective group morphism

$$\bar\varphi_u:E\cap\prescript{u}{}{H}/O_p(E)\cap\prescript{u}{}{H}\longrightarrow E^u/O_p(E^u),
\quad xO_p(E)\cap\prescript{u}{}{H}\mapsto\varphi_u(x)O_p(E^u).$$

Now let us set $e_u:=[E:O_p(E)(E\cap\prescript{u}{}{H})]$.
This then gives $\langle c^{e_u}\rangle=O_p(E)(E\prescript{u}{}{H})/O_p(E)\leq E/O_p(E)$.
We then have the canonical isomorphism:

$$\alpha_u:O_p(E)(E\cap\prescript{u}{}{H})/O_p(E)
\longrightarrow E\cap\prescript{u}{}{H}/O_p(E)\cap\prescript{u}{}{H},\quad
axO_p(E)\mapsto xO_p(E)\cap\prescript{u}{}{H},$$

\noindent where $a\in O_p(E), x\in E\cap\prescript{u}{}{H}$.
Thus we see that $E\cap\prescript{u}{}{H}/O_p(E)\cap\prescript{u}{}{H}$ is also cyclic,
generated by $\alpha_u(c^{e_u})$.
And since $\bar\varphi_u$ is surjective,
we see also that $E^u/O_p(E^u)$ is cyclic, 
generated by $(\bar\varphi_u\circ\alpha_u)(c^{e_u})$.
This shows that $E^u$ is also $p$-hypo-elementary.
Lastly, we set $f_u:=[O_p(E):O_p(E)\cap\prescript{u}{}{H}]$.
Then since $f_u$ is a power of $p$,
and $E^u/O_p(E^u)$ is a $p'$-group,
if we set $c^u:=(\bar\varphi_u\circ\alpha_u)(c^{e_u})^{f_u}$,
we also have $E^u/O_p(E^u)=\langle c^u\rangle$.
Hence $(E^u,c^u)\in\mathscr{T}_p(H)$.
More concretely, if $c=sO_p(E)$ for some $s\in E$,
then $s^{e_u}\in O_p(E)(E\cap\prescript{u}{}{H})$.
So suppose $s^{e_u}=ax$ with $a\in O_p(E), x\in E\cap\prescript{u}{}{H}$.
Then if $\varphi_u(x)=h$ (that is $xu=uh$),
we have $c^u=h^{f_u}O_p(E^u)$.
We can then define the following function:

$$\tilde T_{\mathcal O}(U):\tilde T_{\mathcal O}(H)\longrightarrow\tilde T_{\mathcal O}(G),\quad
\left(z_{(D,d)}\right)_{(D,d)\in\mathscr{T}_p(H)}\mapsto
\left(\prod_{u\in E\backslash U/H}z_{(E^u,c^u)}\right)_{(E,c)\in\mathscr{T}_p(G)}$$

\noindent We can then state the main theorem,
which includes a uniqueness statement on $\tilde T_{\mathcal O}(U)$.

\begin{theorem}\label{tUext}
Given a right-free $(G,H)$-biset $U$, the function 
$\tilde T_{\mathcal O}(U):\tilde T_{\mathcal O}(H)\longrightarrow\tilde T_{\mathcal O}(G)$
is a multiplicative, dual-preserving algebraic function of degree $|U/H|$
such that $\tilde T_{\mathcal O}(U)\circ\tau_H=\tau_G\circ T_{\mathcal O}(U)$
and $\tilde T_{\mathcal O}(U)\circ\tilde l_H=\tilde l_G\circ\tilde B(U)$.
Moreover, if $f:\tilde T_{\mathcal O}(H)\longrightarrow\tilde T_{\mathcal O}(G)$
is a multiplicative function satisfying $f\circ\tau_H=\tau_G\circ T_{\mathcal O}(U)$,
then $f=\tilde T_{\mathcal O}(U)$.
\end{theorem}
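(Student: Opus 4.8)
The plan is to verify the listed properties of $\tilde T_{\mathcal O}(U)$ in turn, then deduce uniqueness from the Theorem of Dress. First I would check multiplicativity directly: since multiplication in both ghost rings is componentwise and the $(E,c)$-component of $\tilde T_{\mathcal O}(U)(z)$ is the product $\prod_{u\in E\backslash U/H} z_{(E^u,c^u)}$, the map is plainly a ring homomorphism once one checks that the assignment $u\mapsto(E^u,c^u)$ is well-defined on the double-coset representatives; this latter point follows from the construction preceding the theorem. Next, dual-preservation: the duality operators on both ghost rings are induced by $\delta_{-1}\in\Delta$, acting by $(z_{(D,d)})\mapsto(\delta_{-1}(z_{(D,d^{-1})}))$. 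One checks that $(E^u,(c^{-1})^u)=(E^u,(c^u)^{-1})$ by following through the definitions of $e_u,f_u,\bar\varphi_u,\alpha_u$ — the point is that replacing $c$ by $c^{-1}$ replaces the chosen $s$ by $s^{-1}$ and ultimately conjugates $c^u$ to its inverse — and since $\delta_{-1}$ commutes with everything, dual-preservation drops out.

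For the degree statement, the same argument as in Lemma \ref{sUalgebraic} and the discussion of $\tilde B(U)$ applies: the $(E,c)$-component is a product of $|E\backslash U/H|$ evaluation maps, each additive (degree $\le 1$) in the appropriate sense, so by Property 4 of Proposition \ref{algprops} each component is algebraic of degree $\le|E\backslash U/H|\le|U/H|$, with equality achieved for $E=\mathbf 1$ (where there are $|U/H|$ cosets and the evaluation maps are linearly independent), and then the whole map has degree $|U/H|$ by Property 3. For the commuting square $\tilde T_{\mathcal O}(U)\circ\tau_H=\tau_G\circ T_{\mathcal O}(U)$, I would evaluate both sides on a class $[M]$ of a trivial source $FH$-module and compare $(E,c)$-components: the right side gives $\tau_{t_U(M),E}(c)$, the Brauer character of the Brauer construction $t_U(M)(O_p(E))$ at $c$; the left side gives $\prod_{u\in E\backslash U/H}\tau_{M,E^u}(c^u)$. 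One reduces to the case $M=FX$ using Proposition \ref{tUprops}(7) and that both sides are algebraic (so the Theorem of Dress lets us test on permutation modules), where the identity becomes a fixed-point count analogous to the verification that $\tilde l_G$ extends $l_G$: the fixed points of $s_U(X)=X^n$ under $\langle O_p(E),c\rangle$ factor as a product over $E$-orbits on $U/H$ of fixed-point counts of $X$ under the corresponding subgroups $\langle O_p(E^u),c^u\rangle$. The remaining square $\tilde T_{\mathcal O}(U)\circ\tilde l_H=\tilde l_G\circ\tilde B(U)$ is then a direct componentwise comparison using $E^u$ as the relevant subgroup of $H$ and the definition of $\tilde B(U)$ via $S^u$.

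Finally, uniqueness: suppose $f$ is multiplicative with $f\circ\tau_H=\tau_G\circ T_{\mathcal O}(U)=\tilde T_{\mathcal O}(U)\circ\tau_H$. Then $f$ and $\tilde T_{\mathcal O}(U)$ agree on the image of $\tau_H$, which is a subgroup of finite index in $\tilde T_{\mathcal O}(H)$. I would invoke the fact — exactly the mechanism flagged after Proposition \ref{algprops} and used throughout Section \ref{GhostRings} — that a multiplicative (hence algebraic, once we observe the relevant maps are algebraic) function on a ghost ring is determined by its restriction to any finite-index subring containing the image of the representation ring, via the uniqueness in Property 7 of Proposition \ref{algprops} applied after passing to a Grothendieck ring; alternatively, since $T_{\mathcal O}(U)$ is algebraic of degree $|U/H|$ and $\tau_H$ is additive and injective, $\tau_G\circ T_{\mathcal O}(U)$ extends uniquely along $\tau_H$ to an algebraic map, and both $f$ and $\tilde T_{\mathcal O}(U)$ are such extensions. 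The main obstacle I anticipate is the fixed-point bookkeeping in the commuting square with $\tau_H,\tau_G$: correctly matching the $E$-orbit decomposition of $U/H$ with the Brauer constructions $M(O_p(E^u))$ and the twisted elements $c^u$ requires care, and it is here that the somewhat intricate definitions of $e_u$, $f_u$, and $c^u$ must be used exactly as set up.
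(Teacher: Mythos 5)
Your proposal has a genuine gap at the heart of the argument, namely in establishing $\tilde T_{\mathcal O}(U)\circ\tau_H=\tau_G\circ T_{\mathcal O}(U)$. You propose to reduce to permutation modules $M=FX$ via the Theorem of Dress, but this reduction fails: the Theorem of Dress (Property 7 of Proposition \ref{algprops}) only tells you that an algebraic map on $T_{\mathcal O}(H)$ is determined by its values on a sub\emph{semiring} whose associated Grothendieck ring is all of $T_{\mathcal O}(H)$. The semiring of classes of permutation modules does not have this property; its Grothendieck ring is merely $l_H(B(H))$, which is in general a proper subring of $T_{\mathcal O}(H)$ (already for $H$ a $p'$-group of order $\geq 3$, where $T_{\mathcal O}(H)\cong R_K(H)$ has rank equal to the number of conjugacy classes while $B(H)$ has rank equal to the number of conjugacy classes of subgroups). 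Two algebraic maps agreeing on permutation modules may therefore differ on $T_{\mathcal O}(H)$. The paper instead tests on \emph{monomial} $\mathcal OH$-modules, which do additively generate $T_{\mathcal O}(H)$, and this is precisely why the bulk of the proof is a long explicit computation: one must show $t_U$ of a monomial module is again monomial with underlying $G$-set $s_U(X)$, determine the twisting characters $\psi_{(x_1,\dots,x_n)}$ explicitly in terms of $\langle\pi\rangle$-orbits, and carefully match the $E$-orbit decomposition of $U/H$ against the Brauer characters via the quantities $e_u$, $f_u$, $d_j$, $m_i$. None of this reduces to a fixed-point count as it does for $\tilde l_G$; the roots of unity carried by the monomial structure are essential.

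Your uniqueness argument is also not quite right as stated. The hypothesis on $f$ is only that it is multiplicative, not algebraic, so you cannot invoke uniqueness of algebraic extensions; and even for algebraic maps, Property 7 would only give uniqueness of the extension from $\tau_H(T^+_{\mathcal O}(H))$ to $\tau_H(T_{\mathcal O}(H))$, not to the full ghost ring $\tilde T_{\mathcal O}(H)$ of which it is a proper finite-index subring. The paper's argument avoids this entirely: for any $b$, the element $|H|b$ lies in $\tau_H(T_{\mathcal O}(H))$, so $f(|H|b)$ is forced; then multiplicativity gives $\tilde T_{\mathcal O}(U)(|H|\mathbf 1_H)\,f(b)=\tilde T_{\mathcal O}(U)(|H|\mathbf 1_H)\,\tilde T_{\mathcal O}(U)(b)$, and one cancels the factor $\tilde T_{\mathcal O}(U)(|H|\mathbf 1_H)=(|H|^{|E\backslash U/H|})_{(E,c)}$, which has a nonzero integer in every component. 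You would do well to adopt that mechanism rather than appealing to Dress here. The remaining items in your outline (well-definedness, lying in the $G\times\Delta$-fixed points, dual-preservation, degree, and the square with $\tilde l$) are in line with the paper's approach, though well-definedness on double cosets requires a genuine verification that $(E^{v},c^{v})$ and $(E^u,c^u)$ are $H$-conjugate when $v=auh$, which the text preceding the theorem does not itself establish.
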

\begin{proof}
We will break up the proof into the following parts:
In part (a) we prove that $\tilde T_{\mathcal O}(U)$ is well-defined.
In part (b) we prove that the image of $\tilde T_{\mathcal O}(U)$
lies in $\tilde T_{\mathcal O}(G)$ as claimed.
It is then clear that $\tilde T_{\mathcal O}(U)$ is multiplicative, dual-preserving,
and algebraic of degree $|U/H|$.
In part (c) of the proof, we show that
$\tilde T_{\mathcal O}(U)\circ\tilde l_H=\tilde l_G\circ\tilde B(U)$.
In part (d) we prove that $\tilde T_{\mathcal O}(U)\circ\tau_H=\tau_G\circ T_{\mathcal O}(U)$.
And in part (e), we prove the uniqueness statement.

(a) We first show that $\tilde T_{\mathcal O}(U)$ is well-defined,
that is, doesn't depend on the choice of $E\backslash U/H$.
So let us fix an $(E,c)\in\mathscr{T}_p(G)$,
and suppose that $u,v\in U$ with $v=auh$ for some $a\in E$ and $h\in H$.
We claim that $E^v=(E^u)^h$ and $c^v=(c^u)^h$.
Thus $(E^u,c^u)$ and $(E^v,c^v)$ are conjugate in $\mathscr{T}_p(H)$,
and therefore $z_{(E^u,c^u)}=z_{(E^v,c^v)}$.
If $g\in E\cap\prescript{v}{}{H},$ we have

$$gv=v\varphi_v(g) \Rightarrow
g(auh)=(auh)\varphi_v(g) \Rightarrow
(a^{-1}ga)u=u(h\varphi_v(g)h^{-1})$$

\noindent From this we see that $E\cap\prescript{v}{}{H}=\prescript{a}{}{(E\cap\prescript{u}{}{H})}$
and $\varphi_v(g)=h^{-1}\varphi_u(a^{-1}ga)h.$
And since $O_p(E)$ is normal in $E$,
we also have $O_p(E)\cap\prescript{v}{}{H}=\prescript{a}{}{(O_p(E)\cap\prescript{u}{}{H})}$.
Hence $E^v=\varphi_v(E\cap\prescript{v}{}{H})
=\varphi_v(\prescript{a}{}{(E\cap\prescript{u}{}{H})})
=\varphi_u(E\cap\prescript{u}{}{H})^h=(E^u)^h$,
and similarly, $O_p(E^v)=O_p(E)^v=(O_p(E)^u)^h=O_p(E^u)^h$.
So we have
$$\bar\varphi_v(gO_p(E)\cap\prescript{v}{}{H})
=(\bar\varphi_u(a^{-1}gaO_p(E)\cap\prescript{u}{}{H}))^h$$
for $g\in E\cap\prescript{v}{}{H}.$
Now as we saw, $E\cap\prescript{v}{}{H}=\prescript{a}{}{(E\cap\prescript{u}{}{H})}.$
We therefore have
$O_p(E)(E\cap\prescript{v}{}{H})
=\prescript{a}{}{(O_p(E)(E\cap\prescript{u}{}{H}))}$.
Now both $O_p(E)(E\cap\prescript{v}{}{H})$ and $O_p(E)(E\cap\prescript{u}{}{H})$
are subgroups of $E$ containing $O_p(E)$.
So we get conjugate subgroups in $E/O_p(E)$.
But $E/O_p(E)$ is cyclic, so conjugation by $a$ is trivial.
Hence $O_p(E)(E\cap\prescript{v}{}{H})/O_p(E)=O_p(E)(E\cap\prescript{u}{}{H})/O_p(E)$.
Therefore, by the Correspondence Theorem, we see that 
$O_p(E)(E\cap\prescript{v}{}{H})=O_p(E)(E\cap\prescript{u}{}{H})$.
Then clearly $e_v=e_u$,
and we have $(\bar\varphi_v\circ\alpha_v)(c^{e_v})=h^{-1}(\bar\varphi_u\circ\alpha_u)(c^{e_u})h$.
Similarly, we can see $f_v=f_u$, which then implies $c^v=(c^u)^h$.
From this, we see that the product
$\prod_{u\in E\backslash U/H}z_{(E^u,c^u)}$ does not depend on the choice of $E\backslash U/H$.
Therefore $\tilde T_{\mathcal O}(U)$ is indeed a well-defined function.

(b) We next show that the image of $\tilde T_{\mathcal O}(U)$
is fixed by the actions of $G$ and $\Delta$.
To see that the image of $\tilde T_{\mathcal O}(U)$ is fixed by $G$,
we just need to notice that for $g\in G$ and $(E,c)\in\mathcal Q_{G,p}$,
we have $((\prescript{g}{}{E})^u,(\prescript{g}{}{c})^u)=(E^{g^{-1}u},c^{g^{-1}u})$ for any $u\in U$,
and if $(u_1,\dots,u_k)$ is an ordered set of representatives of $\prescript{g}{}{E}\backslash U/H$,
then $(g^{-1}u_1,\dots,g^{-1}u_k)$ is an ordered set of representatives of $E\backslash U/H$.

To see that the image of $\tilde T_{\mathcal O}(U)$ is fixed by the action of $\Delta$,
we pick a $\delta_i\in\Delta.$
Then for any $u\in U$, we have

$$(c^{i^*})^u
=(\bar\varphi_u\circ\alpha_u)((c^{i^*})^{e_uf_u})
=(\bar\varphi_u\circ\alpha_u)((c^{e_uf_u})^{i^*})
=(\bar\varphi_u\circ\alpha_u)(c^{e_uf_u})^{i^*}
=(c^u)^{i^*}.$$

\noindent Therefore

$$\delta_i\cdot\left(\prod_{u\in E\backslash U/H}z_{(E^u,c^u)}\right)_{(E,c)\in\mathscr{T}_p(G)}
=\left(\delta_i\left(\prod_{u\in E\backslash U/H}z_{(E^u,(c^{i^*})^u)}\right)\right)_{(E,c)\in\mathscr{T}_p(G)}$$
$$=\left(\prod_{u\in E\backslash U/H}\delta_i\left(z_{(E^u,(c^u)^{i^*})}\right)\right)_{(E,c)\in\mathscr{T}_p(G)}
=\left(\prod_{u\in E\backslash U/H}z_{(E^u,c^u)}\right)_{(E,c)\in\mathscr{T}_p(G)}$$

\noindent This last equality uses the fact that the domain of
$\tilde T_{\mathcal O}(U)$ is also fixed by $\Delta,$
giving $\delta_i(z_{(E^u,(c^u)^{i^*})})=z_{(E^u,c^u)}$.
So we have now shown that $\tilde T_{\mathcal O}(U)$ is a well-defined function
whose image lies in $\tilde T_{\mathcal O}(G)$ as stated.

(c) Now we show that
$\tilde T_{\mathcal O}(U)\circ\tilde l_H=\tilde l_G\circ\tilde B(U)$.
So suppose that $(n_S)_{S\in\mathscr{S}(H)}\in\tilde B(H)$.
We then have the following:

\begin{align*}
(\tilde l_G\circ\tilde B(U))((n_S)_{S\in\mathscr{S}(H)})
&=\tilde l_G\left(\left(\prod_{u\in T\backslash U/H}n_{T^u}\right)_{T\in\mathscr{S}(G)}\right)
=\left(\prod_{u\in E\backslash U/H}n_{E^u}\right)_{(E,c)\in\mathscr{T}_p(G)}\\
(\tilde T_{\mathcal O}(U)\circ\tilde l_H)((n_S)_{S\in\mathscr{S}(H)})
&=\tilde T_{\mathcal O}(U)((n_D)_{(D,d)\in\mathscr{T}_p(H)})
=\left(\prod_{u\in E\backslash U/H}n_{E^u}\right)_{(E,c)\in\mathscr{T}_p(G)}
\end{align*}

\noindent So we see that $\tilde T_{\mathcal O}(U)\circ\tilde l_H=\tilde l_G\circ\tilde B(U)$.

(d) Next we show that $\tilde T_{\mathcal O}(U)\circ\tau_H=\tau_G\circ T_{\mathcal O}(U)$.
We know $T_{\mathcal O}(H)$ is generated as an abelian group
by the classes of monomial $\mathcal OH$-modules (see \cite{Boltje} for instance).
Then since $\tilde T_{\mathcal O}(U)\circ\tau_H$ and $\tau_G\circ T_{\mathcal O}(U)$
are both algebraic of degree $|U/H|$,
we need only show they agree on the classes of monomial modules
and apply the Theorem of Dress to get equality.
A monomial $\mathcal OH$-module $M$ admits a decomposition
$M=\bigoplus_{x\in X}M_x$ into $\mathcal O$-submodules $M_x$,
which are free of rank one over $\mathcal O$
where $X$ is an $H$-set such that
$m\in M_x, h\in H,$ implies $hm\in M_{hx}$.
For $x\in X$, we let $H_x$ denote the corresponding stabilizer subgroup in $H$.
Since $M_x$ is $\mathcal O$-free of rank one,
$M_x=\mathcal O_{\psi_x}$ for some homomorphism
$\psi_x:H_x\longrightarrow\mathcal O^\times$.
Now suppose $Q\leq H$ is a $p$-subgroup.
Then $Q\cap H_x$ is contained in the kernel of each $\psi_x$
since 1 is the only $p$th root of unity in $\mathcal O^\times$.
We can see from \cite{BolKuls}, if we now consider $X^Q$ as an $N_H(Q)/Q$-set,
then $M(Q)=\bigoplus_{x\in X^Q}M_x$,
and so $M(Q)$ is a monomial $F[N_H(Q)/Q]$-module.
Now suppose that $t\in N_H(Q)$.
Then the Brauer character of $M(Q)$ at $tQ$ is $\sum_{x\in(X^Q)^{\langle t\rangle}}\psi_x(t)$.
Of course $(X^Q)^{\langle t\rangle}=X^{\langle Q,t\rangle}$,
and so we can see that

$$\tau_H([M])=\left(\sum_{x\in X^D}\psi_x(t)\right)_{(D,b)\in\mathscr{T}_p(H)}\quad
\text{where } b=tO_p(D).$$

Next we want to show that $t_U(M)$ is a monomial $\mathcal OG$-module,
where the underlying $G$-set is $X^n=s_U(X)$.
For any $(x_1,\dots,x_n)\in X^n$,
we can define the $\mathcal O$-module
$M_{(x_1,\dots,x_n)}=M_{x_1}\otimes_{\mathcal O}\cdots\otimes_{\mathcal O}M_{x_n}.$
Since each $M_{x_i}$ is a free $\mathcal O$-module of rank one, clearly

$$t_U(M)\cong\bigoplus_{(x_1,\dots,x_n)\in X^n}M_{(x_1,\dots,x_n)}$$

as $\mathcal O$-modules,
and $G$ permutes the summands according to its action on $X^n=s_U(X)$.
For $(x_1,\dots,x_n)\in X^n,$ we let $G_{(x_1,\dots,x_n)}$ denote its stabilizer subgroup of $G$.
Since $M_{(x_1,\dots,x_n)}$ is free of rank one over $\mathcal O$,
we know that $M_{(x_1,\dots,x_n)}=\mathcal O_{\psi_{(x_1,\dots,x_n)}}$
for some morphism $\psi_{(x_1,\dots,x_n)}:G_{(x_1,\dots,x_n)}\longrightarrow\mathcal O^\times$.
We next determine this morphism.
If $g\in G_{(x_1,\dots,x_n)}$ and $gu_i=u_{\pi(i)}h_i$ for $\pi\in S_n, h_i\in H,$ this means that

$$(x_1,\dots,x_n)=g\cdot(x_1,\dots,x_n)
=(h_{\pi^{-1}(1)}x_{\pi^{-1}(1)},\dots,h_{\pi^{-1}(n)}x_{\pi^{-1}(n)}).$$

\noindent Hence $x_i=h_{\pi^{-1}(i)}x_{\pi^{-1}(i)}$ for $i=1,\dots,n$.
In other words, $h_ix_i=x_{\pi(i)}$ for all $i$.
For each $i$, let us define $l_i\in\N$
to be the smallest natural number such that $\pi^{l_i}(i)=i$.
(Notice that $l_i=[\langle g\rangle:\langle g\rangle\cap\prescript{u_i}{}{H}]$.)
Under the restriction of the action of $S_n$ on $\{1,\dots,n\}$ to the subgroup $\langle\pi\rangle$,
we see that the orbit of $i$ is precisely $\{i,\pi(i),\pi^2(i),\dots,\pi^{l_i-1}(i)\}$.
So clearly $l_i$ is constant on the $\langle\pi\rangle$-orbits of $\{1,\dots,n\}$.
Now since $h_ix_i=x_{\pi(i)},$ we see that

$$h_{\pi^{l_i-1}(i)}\cdots h_{\pi(i)}h_ix_i
=h_{\pi^{l_i-1}(i)}\cdots h_{\pi(i)}x_{\pi(i)}
=\cdots
=h_{\pi^{l_i-1}(i)}x_{\pi^{l_i-1}(i)}
=x_{\pi^{l_i}(i)}
=x_i.$$

\noindent Hence $h_{\pi^{l_i-1}(i)}\cdots h_{\pi(i)}h_i\in H_{x_i}$,
and we can consider $\psi_{x_i}(h_{\pi^{l_i-1}(i)}\cdots h_{\pi(i)}h_i)$,
which is independent of the $\langle\pi\rangle$-orbit of $i$.
Now to determine $\psi_{(x_1,\dots,x_n)}(g),$
we pick a basis of $M_{(x_1,\dots,x_n)}$ in the following way:
Fix an $i\in\{1,\dots,n\}$ and choose any $\mathcal O$-generator $e_{x_i}$ of $M_{x_i}$.
Then for $j=1,\dots,l_i-1,$
we recursively define $e_{x_{\pi^j(i)}}:=h_{\pi^{j-1}(i)}e_{x_{\pi^{j-1}(i)}}\in
M_{h_{\pi^{j-1}(i)}x_{\pi^{j-1}(i)}}=M_{x_{\pi^j(i)}}.$
So we've chosen an $\mathcal O$-generator of $M_{x_{i'}}$
for all $i'$ in the $\langle\pi\rangle$-orbit of $i$.
We then continue this process for the other $\langle\pi\rangle$-orbits of $\{1,\dots,n\}$.
Then $e_{x_1}\otimes\cdots\otimes e_{x_n}$ is an
$\mathcal O$-basis element of $M_{(x_1,\dots,x_n)}$.
So $g\cdot(e_{x_1}\otimes\cdots\otimes e_{x_n})
=\psi_{(x_1,\dots,x_n)}(g)(e_{x_1}\otimes\cdots\otimes e_{x_n}).$
But by construction, we see that

$$g\cdot(e_{x_1}\otimes\cdots\otimes e_{x_n})
=h_{\pi^{-1}(1)}e_{x_{\pi^{-1}(1)}}\otimes\cdots\otimes h_{\pi^{-1}(n)}e_{x_{\pi^{-1}(n)}}
=\prod_{i\in\langle\pi\rangle\backslash\{1,\dots,n\}}\psi_{x_i}(h_{\pi^{l_i-1}(i)}\cdots h_{\pi(i)}h_i)
(e_{x_1}\otimes\cdots\otimes e_{x_n})$$

\noindent where $\langle\pi\rangle\backslash\{1,\dots,n\}$ denotes a full set of representatives
of the $\langle\pi\rangle$-orbits of $\{1,\dots,n\}$.
This then shows that if $g\in G_{(x_1,\dots,x_n)}$ with $gu_i=u_{\pi(i)}h_i,$ then we have

$$\psi_{(x_1,\dots,x_n)}(g)
=\prod_{i\in\langle\pi\rangle\backslash\{1,\dots,n\}}\psi_{x_i}(h_{\pi^{l_i-1}(i)}\cdots h_{\pi(i)}h_i).$$

We are now ready to compute the image of $[M]$ under both relevant compositions:

\begin{align*}
(\tau_G\circ T_{\mathcal O}(U))([M])
&=\tau_G\left(\left[\bigoplus_{(x_1,\dots,x_n)\in X^n}M_{(x_1,\dots,x_n)}\right]\right)
=\left(\sum_{(x_1,\dots,x_n)\in(X^n)^E}\psi_{(x_1,\dots,x_n)}(s)\right)_{(E,c)\in\mathscr{T}_p(G)}\\
&=\left(\sum_{(x_1,\dots,x_n)\in(X^n)^E}\prod_{i\in\langle\pi\rangle\backslash\{1,\dots,n\}}\psi_{x_i}(s_{\pi^{l_i-1}(i)}\cdots s_{\pi(i)}s_i)\right)_{(E,c)\in\mathscr{T}_p(G)}\\
(\tilde T_{\mathcal O}(U)\circ\tau_H)([M])
&=\tilde T_{\mathcal O}(U)\left(\left(\sum_{x\in X^D}\psi_x(t)\right)_{(D,tO_p(D))\in\mathscr{T}_p(H)}\right)
=\left(\prod_{u\in E\backslash U/H}\sum_{x\in X^{E^u}}\psi_x(h_u^{f_u})\right)_{(E,c)\in\mathscr{T}_p(G)}
\end{align*}

\noindent where for $(E,c)\in\mathscr{T}_p(G)$,
we write $c=sO_p(E)$ with $su_i=u_{\pi(i)}s_i$,
and for $u\in U$, we write $c^u=h_u^{f_u}O_p(E^u)$.
So to prove that these two compositions agree,
let us fix one such $(E,c)\in\mathscr{T}_p(G)$.
We can then partition $\{1,\dots,n\}$ by some $\lambda=\{\lambda_1,\dots,\lambda_k\}$
such that $u_i$ and $u_{i'}$ are in the same $E\backslash U/H$-class iff $i,i'\in\lambda_j$
for some $j\in\{1,\dots,k\}$ where $k=|E\backslash U/H|$.
We then define a function $\nu:\{1,\dots,n\}\longrightarrow\{1,\dots,k\}$
by $\nu(i)=j$ whenever $i\in\lambda_j$.
And let $\eta:\{1,\dots,k\}\longrightarrow\{1,\dots,n\}$ be the section of $\nu$
defined by sending $j$ to the smallest $i$ such that $i\in\lambda_j$.
We then set $v_j:=u_{\nu(j)}$
so that $\{v_1,\dots,v_k\}$ is a complete set of representatives of $E\backslash U/H$.
Now if $i\in\lambda_j$, we have $u_i=b_iv_jk_i$ for some $b_i\in E$ and $k_i\in H$.
We can then define the following functions, which are mutual inverses of each other:

\begin{align*}
\beta&:\prod_{j=1}^kX^{E^{v_j}}\longrightarrow(X^n)^E,\quad
(y_1,\dots,y_k)\mapsto(x_1,\dots,x_n)\text{ where }x_i=k_i^{-1}y_{\nu(i)},\\
\alpha&:(X^n)^E\longrightarrow\prod_{j=1}^kX^{E^{v_j}},\quad
(x_1,\dots,x_n)\mapsto(x_{\eta(1)},\dots,x_{\eta(k)}).
\end{align*}

Now $E/O_p(E)=\langle c\rangle$ is a $p'$-group,
and we can assume $c=sO_p(E)$ for some $p'$-element $s\in E$.
Also we assume $su_i=u_{\pi(i)}s_i$ for some $\pi\in S_n,$ and $s_i\in H$.
For $j=1,\dots,k,$ we set $e_j:=e_{v_j}=[E:O_p(E)(E\cap\prescript{v_j}{}{H})],$
and $f_j:=f_{v_j}=[O_p(E)(E\cap\prescript{v_j}{}{H}):E\cap\prescript{v_j}{}{H}]$.
Then $s^{e_j}=a_jx_j$ for some $a_j\in O_p(E),x_j\in E\cap\prescript{v_j}{}{H}$.
We then set $h_j:=\varphi_{v_j}(x_j)\in E^{v_j}$, and we can define the function

$$\delta_j:X^{E^{v_j}}\longrightarrow\mathcal O^\times,
\quad x\mapsto\psi_x(h_j)^{f_j}.$$

\noindent Since $s\in E$, notice we always have $\nu(i)=\nu(\pi(i))$.
In other words, if $i\in\lambda_j$, then also $\pi(i)\in\lambda_j$.
Hence the action of $\langle\pi\rangle$ on $\{1,\dots,n\}$
induces an action on each $\lambda_j$.
Then if we let $\langle\pi\rangle\backslash\lambda_j$
denote a complete set of representatives of the $\langle\pi\rangle$-orbits
of $\lambda_j$ under this restriction, we can define the function

$$\epsilon_j:(X^n)^E\longrightarrow\mathcal O^\times,\quad
(x_1,\dots,x_n)\mapsto\prod_{i\in\langle\pi\rangle\backslash\lambda_j}\psi_{x_i}(s_{\pi^{l_i-1}(i)}\cdots s_{\pi(i)}s_i).$$

Now for each $j\in\{1,\dots,k\}$
we let $p_j$ denote the projection of $\prod_{j=1}^kX^{E^{v_j}}$ onto $X^{E^{v_j}}$.
We claim that $\epsilon_j\circ\beta=\delta_j\circ p_j$ for each $j$.
To see this, suppose that $(y_1,\dots,y_k)\in\prod_{j=1}^kX^{E^{v_j}}$.
Then

$$(\epsilon_j\circ\beta)(y_1,\dots,y_k)=\epsilon_j(x_1,\dots,x_n)
=\prod_{i\in\langle\pi\rangle\backslash\lambda_j}\psi_{x_i}(s_{\pi^{l_i-1}(i)}\cdots s_{\pi(i)}s_i),$$

\noindent where $x_i=k_i^{-1}y_j$ for $i\in\lambda_j$.
This means that $\psi_{x_i}=\psi_{k_i^{-1}y_j}=\prescript{k_i^{-1}}{}{\psi_{y_j}}$.
Hence

$$(\epsilon_j\circ\beta)(y_1,\dots,y_k)
=\prod_{i\in\langle\pi\rangle\backslash\lambda_j}\prescript{k_i^{-1}}{}{\psi_{y_j}}(s_{\pi^{l_i-1}(i)}\cdots s_{\pi(i)}s_i)
=\prod_{i\in\langle\pi\rangle\backslash\lambda_j}\psi_{y_j}(k_is_{\pi^{l_i-1}(i)}\cdots s_{\pi(i)}s_ik_i^{-1})$$

\noindent On the other hand, $(\delta_j\circ p_j)(y_1,\dots,y_k)=\psi_{y_j}(h_j)^{f_j}$.
Now for $i\in\lambda_j$, we see that

$$s^{l_i}u_i=s^{l_i-1}u_{\pi(i)}s_i=s^{l_i-2}u_{\pi^2(i)}s_{\pi(i)}s_i
=\cdots=u_{\pi^{l_i}(i)}s_{\pi^{l_i-1}(i)}\cdots s_{\pi(i)}s_i
=u_is_{\pi^{l_i-1}(i)}\cdots s_{\pi(i)}s_i.$$

\noindent Hence $s^{l_i}\in E\cap\prescript{u_i}{}{H}$
with $\varphi_{u_i}(s^{l_i})=s_{\pi^{l_i-1}(i)}\cdots s_{\pi(i)}s_i$.
Since $u_i=b_iv_jk_i,$
then $b_i^{-1}s^{l_i}b_i\in E\cap\prescript{v_j}{}{H}$
with $\varphi_{v_j}(b_i^{-1}s^{l_i}b_i)=k_i\varphi_{u_i}(s^{l_i})k_i^{-1}$.
On the other hand, we notice
$s^{l_i}\in E\cap\prescript{u_i}{}{H}\leq O_p(E)(E\cap\prescript{u_i}{}{H})
=O_p(E)(E\cap\prescript{v_j}{}{H})$.
But $e_j$ is the smallest natural number such that
$s^{e_j}\in O_p(E)(E\cap\prescript{v_j}{}{H})$.
Hence $s^{l_i}=(s^{e_j})^{m_i}$ for some $m_i\in\N$.
So if we set $m:=[E:O_p(E)]$,
then the assumption that $s$ is a $p'$ element implies the order of $s$ is $m$.
Thus $l_i\equiv e_jm_i\pmod m$.
Now since $a_j\in O_p(E)\trianglelefteq E,$ we see that 
$s^{l_i}=(s^{e_j})^{m_i}=(a_jx_j)^{m_i}=a_j'x_j^{m_i},$
for some $a_j'\in O_p(E)$.
We then compute

\begin{align*}
(\bar\varphi_{v_j}\circ\alpha_{v_j})(s^{l_i}O_p(E))
&=\bar\varphi_{v_j}(\alpha_{v_j}(a_j'x_j^{m_i}O_p(E)))
=\bar\varphi_{v_j}(x_j^{m_i}O_p(E)\cap\prescript{v_j}{}{H})\\
&=\varphi_{v_j}(x_j^{m_i})O_p(E^{v_j})
=\varphi_{v_j}(x_j)^{m_i}O_p(E^{v_j})
=h_j^{m_i}O_p(E^{v_j})
\end{align*}

\noindent But on the other hand,

$$(\bar\varphi_{v_j}\circ\alpha_{v_j})(s^{l_i}O_p(E))
=\varphi_{v_j}(b_i^{-1}s^{l_i}b_i)O_p(E^{v_j})
=k_i\varphi_{u_i}(s^{l_i})k_i^{-1}O_p(E^{v_j})
=k_is_{\pi^{l_i-1}(i)}\cdots s_{\pi(i)}s_ik_i^{-1}O_p(E^{v_j}).$$

\noindent Now $O_p(E^{v_j})$ is contained in the kernel of $\psi_{y_j}$,
so $\psi_{y_j}(k_is_{\pi^{l_i-1}(i)}\cdots s_{\pi(i)}s_ik_i^{-1})=\psi_{y_j}(h_j)^{m_i}.$
Therefore

$$(\epsilon_j\circ\beta)(y_1,\dots,y_k)
=\prod_{i\in\langle\pi\rangle\backslash\lambda_j}\psi_{y_j}(h_j)^{m_i}
=\psi_{y_j}(h_j)^{\sum_{i\in\langle\pi\rangle\backslash\lambda_j}m_i}.$$

\noindent For each $i\in\lambda_j$, we have $l_i\equiv e_jm_i\pmod m,$ and
$m=[E:O_p(E)]=[E:O_p(E)(E\cap\prescript{v_j}{}{H})][O_p(E)(E\cap\prescript{v_j}{}{H}):O_p(E)]$.
So if we set $d_j:=[O_p(E)(E\cap\prescript{v_j}{}{H}):O_p(E)],$
then $m=e_jd_j$ or $\frac{m}{e_j}=d_j$.
Now we see that $e_j\sum_{i\in\langle\pi\rangle\backslash\lambda_j}m_i
=\sum_{i\in\langle\pi\rangle\backslash\lambda_j}e_jm_i,$
and since $e_jm_i\equiv l_i\pmod m,$
we have $e_j\sum_{i\in\langle\pi\rangle\backslash\lambda_j}m_i\equiv
\sum_{i\in\langle\pi\rangle\backslash\lambda_j}l_i\pmod m$.
But clearly $\sum_{i\in\langle\pi\rangle\backslash\lambda_j}l_i=|\lambda_j|$.
We can see that $E/E\cap\prescript{v_j}{}{H}=\{b_iE\cap\prescript{v_j}{}{H}:i\in\lambda_j\}$,
and therefore $|\lambda_j|=[E:E\cap\prescript{v_j}{}{H}]=
[E:O_p(E)(E\cap\prescript{v_j}{}{H})][O_p(E)(E\cap\prescript{v_j}{}{H}):E\cap\prescript{v_j}{}{H}]
=e_jf_j.$
So putting all of this together,
we see that $e_j\sum_{i\in\langle\pi\rangle\backslash\lambda_j}m_i\equiv e_jf_j\pmod m$.
And since $\frac{m}{e_j}=d_j$,
we get $\sum_{i\in\langle\pi\rangle\backslash\lambda_j}m_i\equiv f_j\pmod {d_j}$.
Now since $h_j=\varphi_{v_j}(x_j),$
we see that $o(h_j)\mid o(x_j)=o(s^{e_j})=\frac{m}{e_j}=d_j$.
Hence $\psi_{y_j}(h_j)$ is a $d_j$th root of unity in $\mathcal O^\times$.
Then since $\sum_{i\in\langle\pi\rangle\backslash\lambda_j}m_i\equiv f_j\pmod{d_j},$
we see that

$$(\epsilon_j\circ\beta)(y_1,\dots,y_k)
=\psi_{y_j}(h_j)^{\sum_{i\in\langle\pi\rangle\backslash\lambda_j}m_i}
=\psi_{y_j}(h_j)^{f_j}
=(\delta_j\circ p_j)(y_1,\dots,y_k).$$

So we have that $\epsilon_j\circ\beta=\delta_j\circ p_j$ for $j=1,\dots,k$.
So finally we see the following:

\begin{align*}
&\sum_{(x_1,\dots,x_n)\in(X^n)^E}\prod_{i\in\langle\pi\rangle\backslash\{1,\dots,n\}}\psi_{x_i}(s_{\pi^{l_i-1}(i)}\cdots s_{\pi(i)}s_i)
=\sum_{(x_1,\dots,x_n)\in(X^n)^E}\prod_{j=1}^k\epsilon_j(x_1,\dots,x_n)\\
&=\sum_{(x_1,\dots,x_n)\in(X^n)^E}\prod_{j=1}^k(\epsilon_j\circ\beta\circ\alpha)(x_1,\dots,x_n)
=\sum_{(x_1,\dots,x_n)\in(X^n)^E}\prod_{j=1}^k(\delta_j\circ p_j)(x_{\eta(1)},\dots,x_{\eta(k)})\\
&=\sum_{(x_1,\dots,x_n)\in(X^n)^E}\prod_{j=1}^k\delta_j(x_{\eta(j)})
=\sum_{(y_1,\dots,y_k)\in\prod_{j=1}^kX^{E^{v_j}}}\prod_{j=1}^k\delta_j(y_j)
\end{align*}

\noindent Then by distributivity,
this is equal to $\prod_{j=1}^k\sum_{x\in X^{E^{v_j}}}\delta_j(x)
=\prod_{j=1}^k\sum_{x\in X^{E^{v_j}}}\psi_x(h_j)^{f_j}$.
This proves that the $(E,c)$-components of $(\tau_G\circ T_{\mathcal O}(U))([M])$
and $(\tilde T_{\mathcal O}(U)\circ\tau_h)([M])$ are equal.
So we see that $\tau_G\circ T_{\mathcal O}(U)$ and $\tilde T_{\mathcal O}(U)\circ\tau_H$
agree on the classes of monomial modules.
Then since both compositions are algebraic,
we can conclude that $\tilde T_{\mathcal O}(U)\circ\tau_H=\tau_G\circ T_{\mathcal O}(U).$

(e) Finally, we show that this extension $\tilde T_{\mathcal O}(U)$ is the unique
multiplicative extension of $T_{\mathcal O}(U)$.
So suppose that also $f:\tilde T_{\mathcal O}(H)\longrightarrow\tilde T_{\mathcal O}(G)$
is multiplicative and satisfies $f\circ\tau_h=\tau_G\circ T_{\mathcal O}(U)$.
We know that the ghost map $\tau_H$ is injective with finite cokernel annihilated by $|H|$.
So pick $b\in\tilde T_{\mathcal O}(H)$.
Then $|H|b=\tau_H(a)$ for some $a\in T_{\mathcal O}(H)$.
Hence
$$f(|H|b)=f(\tau_H(a))=(f\circ\tau_H)(a)=(\tau_G\circ T_{\mathcal O}(U))(a)=
(\tilde T_{\mathcal O}(U)\circ\tau_H)(a)=\tilde T_{\mathcal O}(U)(\tau_h(a))=
\tilde T_{\mathcal O}(U)(|H|b).$$
Now if $\mathbf 1_H$ denotes the identity of $\tilde T_{\mathcal O}(H)$,
then we can similarly see that $f(|H|\mathbf 1_H)=\tilde T_{\mathcal O}(U)(|H|\mathbf 1_H).$
So we have $$\tilde T_{\mathcal O}(U)(|H|\mathbf1_H)f(b)=
f(|H|\mathbf1_H)f(b)=f(|H|b)=\tilde T_{\mathcal O}(U)(|H|b)=
\tilde T_{\mathcal O}(U)(|H|\mathbf1_H)\tilde T_{\mathcal O}(U)(b).$$
Let us set $B:=\tilde T_{\mathcal O}(U)(|H|\mathbf1_H)\in\tilde T_{\mathcal O}(H)$.
An easy computation shows that $B=(|H|^{|E\backslash U/H|})_{(E,c)\in\mathscr{T}_p(G)}.$
Hence $B$ has a nonzero integer in each component.
Therefore $Bf(b)=B\tilde T_{\mathcal O}(U)(b)$ implies $f(b)=\tilde T_{\mathcal O}(U)(b)$.
Thus we conclude that $\tilde T_{\mathcal O}(U)$ is unique among multiplicative functions
extending $T_{\mathcal O}(U)$ to ghost rings.
\end{proof}

If we set $\tilde T_F(U)=\tilde T_{\mathcal O}(U)$,
then the previous theorem shows also that $\tilde T_F(U)$ extends $T_F(U)$ to ghost rings.
So we have defined tensor induction functions between Burnside rings and trivial source rings,
and extended them to ghost rings.
We next deal with the character ring.
We know that two $KH$-modules have the same image in $R_K(H)$
if and only if they are isomorphic as $KH$-modules.
So if we let $R_K(H)^+\subseteq R_K(H)$
denote the semiring generated by the classes of $KH$-modules,
then we see the functor $t_U$ induces a map $R_K(U):R_K(H)^+\longrightarrow R_K(G)$
by $R_K(U)([M])=[t_U(M)]$.
As with $T_{\mathcal O}(U)$, we see that Proposition \ref{tUprops}
can be used to show that $R_K(U)$ is multiplicative, preserves duals,
and is algebraic of degree $|U/H|$.
Hence this map extends uniquely to a multiplicative, dual-preserving, algebraic function
$R_K(H)\longrightarrow R_K(G)$, which we will also denote by $R_K(U)$.
Before continuing, we first prove the following,
which was previously proved in a special case and via different methods
by David Gluck and Marty Isaacs in \cite{GluIsa}:
\begin{proposition}\label{inducedchar}
If $M$ is a $KH$-module with character $\chi$,
then $t_U(M)$ has character $\chi^U$, where for $x\in G$,
$$\chi^U(x)=\prod_{u\in\langle x\rangle\backslash U/H}\chi(\varphi_u(x^{n_u})),$$
where $n_u=[\langle x\rangle:\langle x\rangle\cap\prescript{u}{}{H}]$.
\end{proposition}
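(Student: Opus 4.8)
The plan is a direct character computation from the definition of $t_U$. Recall that $t_U(M) = M^{\otimes n}$ with $n = |U/H|$, and that $G$ acts through the homomorphism $\theta : G \to H\wr S_n$ attached to the chosen representatives $u_1,\dots,u_n$ of $U/H$: if $xu_i = u_{\pi(i)}h_i$ for $x\in G$, then $\theta(x) = (h_{\pi^{-1}(1)},\dots,h_{\pi^{-1}(n)};\pi)$ acts on $M^{\otimes n}$ by permuting the tensor factors according to $\pi$ and scaling by the corresponding $h_i$. Hence the character of $t_U(M)$ at $x$ is the trace of this wreath element on $M^{\otimes n}$, and the whole proposition is a matter of evaluating that trace.

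First I would record the standard cycle-decomposition formula for such a trace: for any $(k_1,\dots,k_n;\pi)\in H\wr S_n$, the trace of $(k_1,\dots,k_n;\pi)$ on $M^{\otimes n}$ equals the product, over the cycles of $\pi$, of the value of $\chi$ on the product of the entries $k_j$ taken with $j$ running once along the cycle (this product of $k_j$'s is defined only up to cyclic rotation, but its $\chi$-value is unambiguous since $\chi$ is a class function). This is proved by fixing a $K$-basis of $M$, writing out the diagonal matrix entries of the action of $(k_1,\dots,k_n;\pi)$ on the induced monomial basis of $M^{\otimes n}$, observing that the sum over basis vectors factors as a product over the cycles of $\pi$, and recognizing each factor as the trace of the associated product of matrices.

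Next I would specialize to $\theta(x)$. Because $\langle x\rangle$ acts on $U/H = \{u_1H,\dots,u_nH\}$ and $\pi$ records this action, the cycles of $\pi$ are exactly the $\langle x\rangle$-orbits on $U/H$, hence are indexed by a set of representatives of $\langle x\rangle\backslash U/H$; the cycle through an index $i$ with $u := u_i$ has length $[\langle x\rangle:\langle x\rangle\cap\prescript{u}{}{H}] = n_u$, so in particular $x^{n_u}\in\prescript{u}{}{H}$ and $\varphi_u(x^{n_u})$ is defined. Writing this cycle as $i = i_1,\ i_2 = \pi(i_1),\ \dots,\ i_{n_u} = \pi^{n_u-1}(i_1)$ and iterating the relations $xu_{i_t} = u_{i_{t+1}}h_{i_t}$ one obtains $x^{n_u}u_{i_1} = u_{i_1}\big(h_{i_{n_u}}h_{i_{n_u-1}}\cdots h_{i_1}\big)$, that is $\varphi_u(x^{n_u}) = h_{i_{n_u}}\cdots h_{i_1}$. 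Unwinding the index shift built into $\theta$ (its $j$-th entry is $h_{\pi^{-1}(j)}$), the product of the entries of $\theta(x)$ along this cycle is a cyclic rotation of $h_{i_{n_u}}\cdots h_{i_1}$, so $\chi$ evaluates it to $\chi(\varphi_u(x^{n_u}))$. Multiplying over all cycles gives $\chi^U(x) = \prod_{u\in\langle x\rangle\backslash U/H}\chi(\varphi_u(x^{n_u}))$. I would finish by checking that the right-hand side does not depend on the chosen double-coset representatives: replacing $u$ by $x^a u h$ leaves $n_u$ unchanged and conjugates $\varphi_u(x^{n_u})$, so the value of $\chi$ is unaffected.

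The argument has no conceptual obstacle; its content is the classical trace formula of the second step together with careful bookkeeping, and the step most prone to error is matching conventions in the third step — because $\theta(x)$ carries entries $h_{\pi^{-1}(j)}$ rather than $h_j$, one must confirm that the product of entries along a cycle comes out as a cyclic rotation of $\varphi_u(x^{n_u})$ (which is exactly what makes $\chi$ return the right value) rather than some other rearrangement. If one prefers, the same result can be organized by first restricting $t_U(M)$ to $\langle x\rangle$: using property 5 of Proposition \ref{tUprops} one identifies $\mathrm{Res}^G_{\langle x\rangle}t_U(M)$ with $t_{U'}(M)$ for $U'$ the $(\langle x\rangle,H)$-biset underlying $U$, then splits $U'$ into its $(\langle x\rangle,H)$-transitive pieces $\langle x\rangle wH$ via property 4 of Proposition \ref{tUprops}, reducing the computation to the cyclic, single-cycle case; this rearrangement streamlines the exposition but not the core calculation.
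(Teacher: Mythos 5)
Your proof is correct and takes essentially the same route as the paper: you invoke the cycle-decomposition of the trace of a wreath element on a tensor power and specialize to $\theta(x)$, while the paper carries out that very matrix-entry computation in place (with coefficients $\alpha_{ijk}$) and then reindexes the sum by the cycles of $\pi$; the well-definedness check and the identification $\varphi_u(x^{n_u})=h_{\pi^{n_u-1}(i)}\cdots h_{\pi(i)}h_i$ are likewise the same. Your observation that the entries of $\theta(x)$ along a cycle multiply to a cyclic rotation of $\varphi_u(x^{n_u})$, and hence give the correct $\chi$-value because $\chi$ is a class function, is exactly the detail that reconciles the two presentations.
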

\begin{proof}
To see first that this function is well-defined,
suppose that $u,v\in U$ with $v=x^muh$ for some $m\in\Z,h\in H$.
Then $\prescript{v}{}{H}=\prescript{x^m}{}{(\prescript{u}{}{H})}$.
So $n_v=[\langle x\rangle:\langle x\rangle\cap\prescript{x^m}{}{(\prescript{u}{}{H})}]
=[\langle x\rangle:\prescript{x^m}{}{(\langle x\rangle\cap\prescript{u}{}{H})}]
=[\langle x\rangle:\langle x\rangle\cap\prescript{u}{}{H}]=n_u.$
Then $$\varphi_v(x^{n_v})=\varphi_{x^muh}(x^{n_u})
=h^{-1}\varphi_u(x^mx^{n_u}x^{-m})h=h^{-1}\varphi_u(x^{n_u})h.$$
So $\varphi_v(x^{n_v})$ and $\varphi_u(x^{n_u})$ are conjugate in $H$,
and therefore $\chi(\varphi_v(x^{n_v}))=\chi(\varphi_u(x^{n_u}))$.
Hence $\chi^U(x)$ doesn't depend on the choice of $\langle x\rangle\backslash U/H$.
Now fix a set of representatives $U/H=(u_1,\dots,u_n)$
and suppose that $xu_i=u_{\pi(i)}h_i$ for some $\pi\in S_n$ and $h_i\in H$.
Now $\langle\pi\rangle\leq S_n$ acts on $\{1,\dots,n\}$,
and if $\langle\pi\rangle\backslash\{1,\dots,n\}=\{i_1,\dots,i_k\}$,
then $\langle x\rangle\backslash U/H=(u_{i_1},\dots,u_{i_k})$.
Let us fix a $K$-basis $m_1,\dots,m_r$ of $M$.
Suppose also that $h_im_j=\sum_{k_i=1}^r\alpha_{ijk_i}m_{k_i}$.
We can see for each $i$ that $\varphi_{u_i}(x^{n_{u_i}})=h_{\pi^{n_{u_i}-1}(i)}\cdots h_{\pi(i)}h_i$
and therefore $$\chi(\varphi_{u_i}(x^{n_{u_i}}))
=\sum_{(k_i,k_{\pi(i)},\dots,k_{\pi^{n_{u_i}-1}(i)})\in\{1,\dots,r\}^{n_{u_i}}}
\prod_{j=0}^{n_{u_i}-1}\alpha_{\pi^j(i)k_{\pi^j(i)}k_{\pi^{j+1}(i)}}.$$
So by distributivity, we have
\begin{align*}
\prod_{u\in\langle x\rangle\backslash U/H}\chi(\varphi_u(x^{n_u}))
&=\sum_{(k_1,\dots,k_n)\in\{1,\dots,r\}^n}\prod_{i\in\langle\pi\rangle\backslash\{1,\dots,n\}}
\prod_{j=0}^{n_{u_i}-1}\alpha_{\pi^j(i)k_{\pi^j(i)}k_{\pi^{j+1}(i)}}\\
&=\sum_{(k_1,\dots,k_n)\in\{1,\dots,r\}^n}\prod_{i=1}^n\alpha_{ik_ik_{\pi(i)}}\\
&=\sum_{(k_1,\dots,k_n)\in\{1,\dots,r\}^n}\prod_{i=1}^n\alpha_{\pi^{-1}(i)k_{\pi^{-1}(i)}k_i}
\end{align*}
On the other hand, $\{m_{k_1}\otimes\cdots\otimes m_{k_n}:(k_1,\dots,k_n)\in\{1,\dots,r\}^n\}$
is then a $K$-basis of $t_U(M)$.
Then for $(i_1,\dots,i_n)\in\{1,\dots,r\}^n$, we see that
\begin{align*}
x\cdot(m_{i_1}\otimes\cdots\otimes m_{i_n})
&=h_{\pi^{-1}(1)}m_{i_{\pi^{-1}(1)}}\otimes\cdots\otimes h_{\pi^{-1}(n)}m_{i_{\pi^{-1}(n)}}\\
&=(\sum_{k_1=1}^r\alpha_{\pi^{-1}(1)i_{\pi^{-1}(1)}k_1}m_{k_1})\otimes\cdots\otimes(\sum_{k_n=1}^r\alpha_{\pi^{-1}(n)i_{\pi^{-1}(n)}k_n}m_{k_n})\\
&=\sum_{(k_1,\dots,k_n)\in\{1,\dots,r\}^n}\prod_{j=1}^n\alpha_{\pi^{-1}(j)i_{\pi^{-1}(j)}k_j}m_{k_1}\otimes\cdots\otimes m_{k_n}
\end{align*}
So we see that the character of $t_U(M)$ at $x$ is precisely
$\sum_{(k_1,\dots,k_n)\in\{1,\dots,r\}^n}\prod_{i=1}^n\alpha_{\pi^{-1}(i)k_{\pi^{-1}(i)}k_i}=\chi^U(x)$.
\end{proof}
So the map $R_K(U)$ can be described as the function $\chi\mapsto\chi^U$ on characters.
If $M$ is a trivial source $\mathcal OH$-module,
then clearly $(R_K(U)\circ c_H)([M])=(c_G\circ T_{\mathcal O}(U))([M])$
since both are equal to the character of $t_U(M)$.
Then since both $R_K(U)\circ c_H$ and $c_G\circ T_{\mathcal O}(U)$
are algebraic (of degree $|U/H|$),
we can conclude by the Theorem of Dress
that $R_K(U)\circ c_H=c_G\circ T_{\mathcal O}(U)$.

It is now easy to see how to extend $R_K(U)$ to ghost rings.
We define the function
$$\tilde R_K(U):\tilde R_K(H)\longrightarrow\tilde R_K(G),\quad
(w_h)_{h\in\mathscr{E}(H)}\mapsto
\left(\prod_{u\in\langle x\rangle\backslash U/H}w_{\varphi_u(x^{n_u})}\right)_{x\in\mathscr{E}(G)}.$$
It's immediately clear then that $\tilde R_K(U)$ is multiplicative and algebraic of degree $|U/H|$
and also that $\tilde R_K(U)\circ\varepsilon_H=\varepsilon_G\circ R_K(U)$.
That is, $\tilde R_K(U)$ is a multiplicative extension of $R_K(U)$.
Since $\varepsilon_H$ is injective with finite cokernel,
we see also that $\tilde R_K(U)$ is the unique multiplicative extension of $R_K(U)$
in much the same way we proved
$\tilde T_{\mathcal O}(U)$ is the unique multiplicative extension of $T_{\mathcal O}(U)$.
Also if $(z_{(D,b)})_{(D,b)\in\mathscr T_p(H)}\in\tilde T_{\mathcal O}(H)$,
then $(\tilde R_K(U)\circ\tilde c_H)((z_{(D,b)})_{(D,b)\in\mathscr T_p(H)})
=(\prod_{u\in\langle x\rangle\backslash U/H}
z_{(\langle x\rangle^u,\varphi_u(x^{n_u})\langle x_p\rangle^u)})_{x\in\mathscr{E}(G)}
=(\tilde c_G\circ\tilde T_{\mathcal O}(U))((z_{(D,b)})_{(D,b)\in\mathscr T_p(H)}).$
Hence $\tilde R_K(U)\circ\tilde c_H=\tilde c_G\circ\tilde T_{\mathcal O}(U)$.

Lastly we want to describe the function $R_F(U):R_F(H)\longrightarrow R_F(G)$
induced by the tensor induction functor
and an extension $\tilde R_F(U):\tilde R_F(H)\longrightarrow\tilde R_F(G)$.
Essentially we want to complete the bottom face of the following commutative diagram:

\begin{equation}
\label{tocomplete}
\raisebox{-0.5\height}{\includegraphics{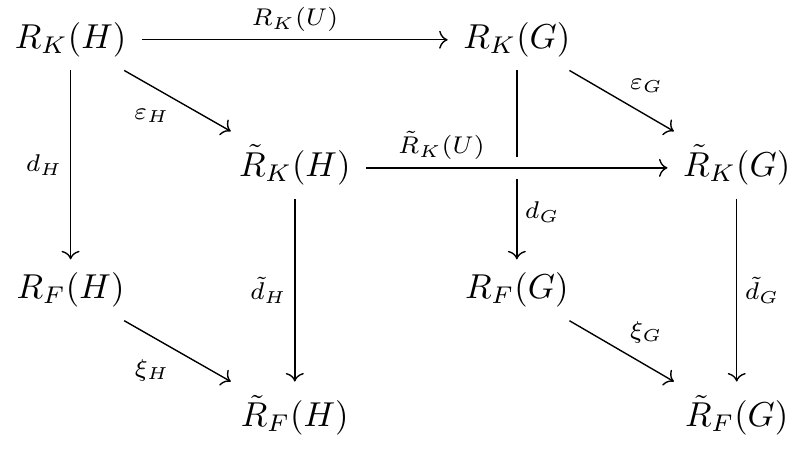}}
\end{equation}

Similar to previous constructions,
we first let $R^+_F(H)$ denote the subring of $R_F(H)$ consisting of the classes of $FH$-modules.
We want to define $R_F(U):R_F^+(H)\longrightarrow R_F(G)$ by $R_F(U)([M])=[t_U(M)]$,
but it is not immediately clear that this is well-defined.
So suppose that $M$ and $N$ are two $FH$-modules such that $[M]=[N]$ in $R^+_F(H)$.
This is equivalent to having
$\mathrm{Res}_T^H(M)\cong\mathrm{Res}_T^H(N)$ for all $p'$-subgroups $T\leq H$.
To see that $R_F(U)$ is well-defined as above, it suffices to show that
$\mathrm{Res}_S^G(t_U(M))\cong\mathrm{Res}_S^G(t_U(N))$ for all $p'$-subgroups $S\leq G$.
So let us fix one $p'$-subgroup $S\leq G$.
Notice that the restriction functor $\mathrm{Res}_S^G$ is just the functor $t_V$,
where $V$ is the elementary restriction $(S,G)$-biset.
Thus $\mathrm{Res}_S^G(t_U(M))=(t_V\circ t_U)(M)=t_{V\times_GU}(M)$.
Let us first write $U=U_1\sqcup\cdots\sqcup U_t$ as a disjoint union of transitive $(G,H)$-bisets,
and then pick a $u_i\in U_i$.
Now each $U_i$ is right-free, and we therefore have isomorphisms
$\bar\varphi_{u_i}:\prescript{u_i}{}{H}/\prescript{u_i}{}{\mathbf 1}\stackrel{\sim}{\longrightarrow}G^{u_i},
g\prescript{u_i}{}{\mathbf 1}\mapsto\varphi_{u_i}(g).$
We can write each transitive $U_i$ as a composition of elementary bisets in the following way:
$$U_i\cong\mathrm{Ind}^G_{\prescript{u_i}{}{H}}\circ\mathrm{Inf}^{\prescript{u_i}{}{H}}_{\prescript{u_i}{}{H}/\prescript{u_i}{}{\mathbf 1}}\circ\mathrm{Iso}(\bar\varphi_{u_i}^{-1})\circ\mathrm{Res}^H_{G^{u_i}}.$$
We then compose each $U_i$ with $V=\mathrm{Res}_S^G$.
First we see by the Mackey formula that
$$\mathrm{Res}_S^G\circ\mathrm{Ind}_{\prescript{u_i}{}{H}}^G\cong\coprod_{x\in S\backslash G/\prescript{u_i}{}{H}}\mathrm{Ind}^S_{S\cap\prescript{xu_i}{}{H}}\circ\mathrm{Iso}(c_x)\circ\mathrm{Res}^{\prescript{u_i}{}{H}}_{S^x\cap\prescript{u_i}{}{H}}$$
where $c_x:S^x\cap\prescript{u_i}{}{H}\stackrel{\sim}{\longrightarrow}S\cap\prescript{xu_i}{}{H}$
is the isomorphism induced by conjugation by $x$.
Then for any $x\in G$, we have
$$\mathrm{Res}^{\prescript{u_i}{}{H}}_{S^x\cap\prescript{u_i}{}{H}}\circ\mathrm{Inf}^{\prescript{u_i}{}{H}}_{\prescript{u_i}{}{H}/\prescript{u_i}{}{\mathbf 1}}\cong\mathrm{Inf}^{S^x\cap\prescript{u_i}{}{H}}_{S^x\cap\prescript{u_i}{}{H}/S^x\cap\prescript{u_i}{}{\mathbf 1}}\circ\mathrm{Iso}(\gamma_x^{-1})\circ\mathrm{Res}^{\prescript{u_i}{}{H}/\prescript{u_i}{}{\mathbf 1}}_{(S^x\cap\prescript{u_i}{}{H})\prescript{u_i}{}{H}/\prescript{u_i}{}{\mathbf 1}}$$
where $\gamma_x:S^x\cap\prescript{u_i}{}{H}/S^x\cap\prescript{u_i}{}{\mathbf 1}\stackrel{\sim}{\longrightarrow}(S^x\cap\prescript{u_i}{}{H})\prescript{u_i}{}{\mathbf 1}/\prescript{u_i}{}{\mathbf 1}$
is the canonical isomorphism.
Next we have
$$\mathrm{Res}^{\prescript{u_i}{}{H}/\prescript{u_i}{}{\mathbf 1}}_{(S^x\cap\prescript{u_i}{}{H})\prescript{u_i}{}{H}/\prescript{u_i}{}{\mathbf 1}}\circ\mathrm{Iso}(\bar\varphi_{u_i}^{-1})\cong\mathrm{Iso}((\bar\varphi_{xu_i}\big|_{(S^x\cap\prescript{u_i}{}{H})\prescript{u_i}{}{\mathbf 1}/\prescript{u_i}{}{\mathbf 1}})^{-1})\circ\mathrm{Res}^{G^{u_i}}_{S^{xu_i}}$$
where the isomorphism $(S^x\cap\prescript{u_i}{}{H})\prescript{u_i}{}{\mathbf 1}/\prescript{u_i}{}{\mathbf 1}\stackrel{\sim}{\longrightarrow}S^{xu_i}$
is given by restricting $\bar\varphi_{u_i}$ to
$(S^x\cap\prescript{u_i}{}{H})\prescript{u_i}{}{\mathbf 1}/\prescript{u_i}{}{\mathbf 1}$.
Finally, we have
$\mathrm{Res}^{G^{u_i}}_{S^{xu_i}}\circ\mathrm{Res}^H_{G^{u_i}}\cong\mathrm{Res}^H_{S^{xu_i}}$
by transitivity.
Then putting these together, we see that
$$V\circ U_i\cong\coprod_{x\in S\backslash G/\prescript{u_i}{}{H}}V_{i,x}\circ\mathrm{Res}^H_{S^{xu_i}}$$
where $V_{i,x}$ is some right-free $(S,S^{xu_i})$-biset.
So finally, we have
$$V\circ U\cong\coprod_{i=1}^t\coprod_{x\in S\backslash G/\prescript{u_i}{}{H}}V_{i,x}\circ\mathrm{Res}^H_{S^{xu_i}}.$$
Then by Proposition \ref{tUprops}, we see that
$$\mathrm{Res}_S^G(t_U(M))\cong\bigotimes_{i=1}^t
\bigotimes_{x\in S\backslash G/\prescript{u_i}{}{H}}t_{V_{i,x}}(\mathrm{Res}^H_{S^{xu_i}}(M)),$$
with a similar result for $N$ in place of $M$.
Since $S$ is a $p'$-subgroup of $G$, so is $S\cap\prescript{xu_i}{}{H}$
for all $x\in G$ and $i=1,\dots,t$.
Hence $S^{xu_i}=\varphi_{xu_i}(S\cap\prescript{xu_i}{}{H})$ is a $p'$-subgroup of $H$.
So by the assumption, $\mathrm{Res}_{S^{xu_i}}^H(M)\cong\mathrm{Res}_{S^{xu_i}}^H(N)$
for all $x\in G$ and $i=1,\dots,t$.
Then by functoriality,
$t_{V_{i,x}}(\mathrm{Res}_{S^{xu_i}}^H(M))\cong t_{V_{i,x}}(\mathrm{Res}_{S^{xu_i}}^H(N))$ for all $x\in G$ and $i=1,\dots,t$.
From here, we can conclude that $\mathrm{Res}^G_S(t_U(M))\cong\mathrm{Res}^G_S(t_U(N))$,
showing that $R_F(U)$ is well-defined.
Is it then clear that $R_F(U)$ is multiplicative and is algebraic of degree $|U/H|$.
Hence we have a multiplicative, dual-preserving function
$R_F(U):R_F(H)\longrightarrow R_F(G)$ which again is algebraic of degree $|U/H|$.

From here we want to show that $R_F(U)\circ d_H=d_G\circ R_K(U)$.
To do so, we just need to show that both compositions agree on $R_K^+(H)$.
So let $M$ be a $KH$-module,
and let $L\subseteq M$ be a full $\mathcal OH$-lattice in $M$
so that $d_H([M])=[F\otimes_{\mathcal O}L]$.
Next we see that $t_U(L)$ is a full $\mathcal OG$-lattice inside $t_U(M)$,
and therefore $$(d_G\circ R_K(U))([M])=d_G([t_U(M)])=[F\otimes_{\mathcal O}t_U(L)].$$
Before proceeding, we prove the following lemma:

\begin{lemma}\label{changescalars}
Let $k,k'$ be two commutative rings with a morphism $k\longrightarrow k'$ of rings.
Then the functors $t_U(k\otimes_{k'}-),k\otimes_{k'}t_U(-):\Mod{k'H}\longrightarrow\Mod{kG}$
are naturally isomorphic.
\end{lemma}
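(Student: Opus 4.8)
The plan is to unwind the definition of $t_U$ and reduce the claim to a classical base-change identity for iterated tensor products; the only real work will be a check of $H\wr S_n$-equivariance.

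First I would fix an ordered set of representatives $(u_1,\dots,u_n)$ of $U/H$ and let $\theta\colon G\longrightarrow H\wr S_n$ be the associated homomorphism, so that by construction $t_U=\mathrm{Res}(\theta)\circ(-)^{\otimes n}$, the $n$-fold tensor power being formed over whichever coefficient ring is at hand. Concretely, $t_U(k\otimes_{k'}M)$ is the $n$-fold tensor power of $k\otimes_{k'}M$ over $k$, and $k\otimes_{k'}t_U(M)$ is $k\otimes_{k'}$ applied to the $n$-fold tensor power of $M$ over $k'$, each carrying the $H\wr S_n$-action recalled in Section~\ref{TenMod} and then restricted along $\theta$.

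Next I would invoke the canonical isomorphism of $k$-modules
\[
\Theta_M\colon\ (k\otimes_{k'}M)^{\otimes n}\ \xrightarrow{\ \sim\ }\ k\otimes_{k'}\bigl(M^{\otimes n}\bigr)
\]
(with the left-hand power taken over $k$ and the right-hand one over $k'$) that sends a pure tensor $(a_1\otimes m_1)\otimes_k\cdots\otimes_k(a_n\otimes m_n)$ to $(a_1\cdots a_n)\otimes(m_1\otimes\cdots\otimes m_n)$. This is the $n$-fold iterate of the elementary, functorial isomorphism $(k\otimes_{k'}M)\otimes_k(k\otimes_{k'}N)\cong k\otimes_{k'}(M\otimes_{k'}N)$, which is standard and holds for arbitrary finitely generated modules (in particular for representation modules, where each side is again free over the relevant ring); naturality of $\Theta_M$ in $M$ is inherited from it.

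Finally --- the one step needing genuine care --- I would verify that $\Theta_M$ intertwines the two $H\wr S_n$-actions; since $\theta$ is a group homomorphism, this at once makes $\Theta_M$ an isomorphism of $kG$-modules. It suffices to test equivariance on the generating families $\iota(\pi)$ and $\kappa(h_1,\dots,h_n)$ of Section~\ref{WreathProd}. For $\iota(\pi)$ both actions permute the $n$ tensor slots by $\pi$ in the same fashion, and since the scalar $a_1\cdots a_n$ is symmetric in the factors, equivariance is immediate. For $\kappa(h_1,\dots,h_n)$, on the left-hand side the $i$-th entry $a_i\otimes m_i$ is replaced by $a_i\otimes h_im_i$ (as $h_i$ acts on $k\otimes_{k'}M$ by $a\otimes m\mapsto a\otimes h_im$), while on the right-hand side $a\otimes(m_1\otimes\cdots\otimes m_n)$ is sent to $a\otimes(h_1m_1\otimes\cdots\otimes h_nm_n)$, and applying $\Theta_M$ matches the two. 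Independence of the choice of representatives $(u_1,\dots,u_n)$ holds on both sides because, as noted in Section~\ref{TenMod}, different choices yield naturally isomorphic functors, so $\Theta_M$ assembles into the asserted natural isomorphism $t_U(k\otimes_{k'}-)\cong k\otimes_{k'}t_U(-)$. I expect the slot-indexing bookkeeping in the $\kappa$-equivariance check --- matching the twist by $\theta$ consistently on each side --- to be the only place where a slip could occur.
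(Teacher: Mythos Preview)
Your proposal is correct and follows essentially the same approach as the paper: you define the very same map $(a_1\otimes m_1)\otimes\cdots\otimes(a_n\otimes m_n)\mapsto (a_1\cdots a_n)\otimes(m_1\otimes\cdots\otimes m_n)$ that the paper writes down. The paper simply declares this map ``clearly an isomorphism of $kG$-modules, natural in $M$'' in one line, whereas you supply the explicit $H\wr S_n$-equivariance check on the generators $\iota(\pi)$ and $\kappa(h_1,\dots,h_n)$; your added detail is sound and the arguments match.
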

\begin{proof}
If $M$ is a $k'H$-module, then the map $t_U(k\otimes_{k'}M)\longrightarrow k\otimes_{k'}t_U(M)$
defined by $$(c_1\otimes m_1)\otimes\cdots\otimes(c_n\otimes m_n)\mapsto
\left(\prod_{i=1}^nc_i\right)\otimes(m_1\otimes\cdots\otimes m_n)$$
is clearly an isomorphism of $kG$-modules, natural in $M$.
\end{proof}

Then applying this lemma to the canonical epimorphism $\mathcal O\longrightarrow F$,
we have the following:
\begin{align*}
(R_F(U)\circ d_H)([M])
&=R_F(U)(F\otimes_{\mathcal O}L)
=[t_U(F\otimes_{\mathcal O}L)]\\
&=[F\otimes_{\mathcal O}t_U(L)]
=d_G([t_U(M)])\\
&=(d_G\circ R_K(U))([M])
\end{align*}

\noindent From here we can conclude that $R_F(U)\circ d_H=d_G\circ R_K(U)$.
Recall that the decomposition $d_H$ is always surjective.
In particular, if $M$ is an $FH$-module with Brauer character $\psi$,
then we have its Brauer lift $\hat\psi\in R_K(H)$ 
where $\hat\psi(h)=\psi(h_{p'})$ for all $h\in H$.
We can then compute $R_K(\hat\psi)$ using Proposition \ref{inducedchar}.
Then after applying $d_G$, the following theorem becomes clear:

\begin{theorem}\label{inducedBchar}
If $M$ is an $FH$-module with Brauer character $\psi$,
then $t_U(M)$ is an $FG$-module with Brauer character $\psi^U$,
where for $x\in\mathscr E_p(G),$
$$\psi^U(x)=\prod_{u\in\langle x\rangle\backslash U/H}\psi(\varphi_u(x^{n_u})),
\quad n_u=[\langle x\rangle:\langle x\rangle\cap\prescript{u}{}{H}].$$
\end{theorem}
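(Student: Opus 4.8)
The theorem should fall out as a corollary of Proposition~\ref{inducedchar} together with the commutativity relation $R_F(U)\circ d_H=d_G\circ R_K(U)$ established just above: the idea is to lift the Brauer character $\psi$ to characteristic zero, tensor-induce there, and then decompose back down to $p$-regular elements. Concretely, I would produce a virtual ordinary character $\hat\psi\in R_K(H)$ with $d_H(\hat\psi)=[M]$ in $R_F(H)$, compute $R_K(U)(\hat\psi)$ by the induced-character formula of Proposition~\ref{inducedchar}, and then restrict to $\mathscr{E}_p(G)$.

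For the lift I would take $\hat\psi$ to be the Brauer lift of $\psi$, i.e.\ the class function on $H$ with $\hat\psi(h)=\psi(h_{p'})$; this is a genuine virtual character of $H$ (classical, see \cite{CurRein} or \cite{Schneider}). In fact one does not need this: since $d_H$ is surjective, \emph{some} $\chi\in R_K(H)$ satisfies $d_H(\chi)=[M]$, and the argument works verbatim with any such $\chi$ in place of $\hat\psi$. Since $\hat\psi$ agrees with $\psi$ on $\mathscr{E}_p(H)$ we get $d_H(\hat\psi)=[M]$, and since $M$ is an honest module, $[t_U(M)]=R_F(U)([M])$. Hence
$$[t_U(M)]=R_F(U)(d_H(\hat\psi))=(d_G\circ R_K(U))(\hat\psi).$$

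Next I would evaluate $R_K(U)(\hat\psi)$ as a class function on $G$. Proposition~\ref{inducedchar} gives the formula $\chi\mapsto\chi^U$ on the semiring $R_K(H)^+$ of actual characters; moreover $\tilde R_K(U)$, defined explicitly on ghost rings by $(w_h)_h\mapsto\bigl(\prod_{u\in\langle x\rangle\backslash U/H}w_{\varphi_u(x^{n_u})}\bigr)_x$, is the unique multiplicative extension of $R_K(U)$ and satisfies $\tilde R_K(U)\circ\varepsilon_H=\varepsilon_G\circ R_K(U)$. Since each $\varepsilon$ is just evaluation at group elements and $\varepsilon_H$ is injective, the identity
$$R_K(U)(\chi)(x)=\prod_{u\in\langle x\rangle\backslash U/H}\chi\bigl(\varphi_u(x^{n_u})\bigr),\qquad n_u=[\langle x\rangle:\langle x\rangle\cap\prescript{u}{}{H}],$$
holds for \emph{every} $\chi\in R_K(H)$, not merely for genuine characters; independence of the choice of $\langle x\rangle\backslash U/H$ is the same computation as at the start of the proof of Proposition~\ref{inducedchar}. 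Applying $d_G$ to the displayed formula for $[t_U(M)]$ amounts to restricting to $x\in\mathscr{E}_p(G)$; for such $x$ the power $x^{n_u}$ is $p$-regular, hence so is $\varphi_u(x^{n_u})$ since $\varphi_u$ is a group homomorphism, whence $\hat\psi(\varphi_u(x^{n_u}))=\psi(\varphi_u(x^{n_u}))$. This yields exactly $\psi^U(x)=\prod_{u\in\langle x\rangle\backslash U/H}\psi(\varphi_u(x^{n_u}))$ for the Brauer character of $t_U(M)$.

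I do not expect any serious obstacle here: once Proposition~\ref{inducedchar} and the commutativity of Diagram~\ref{tocomplete} are in hand, the rest is bookkeeping with $p$-parts and $p'$-parts. The two points that warrant a sentence of care are (i)~the existence of an ordinary lift of $\psi$ (classical, or simply surjectivity of $d_H$), and (ii)~the fact that the induced-character formula of Proposition~\ref{inducedchar} is valid on all of $R_K(H)$ rather than only on $R_K(H)^+$, which is precisely what the ghost-ring description of $\tilde R_K(U)$ and the injectivity of $\varepsilon_H$ deliver.
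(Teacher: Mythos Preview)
Your proposal is correct and follows essentially the same route as the paper: lift $\psi$ to its Brauer lift $\hat\psi\in R_K(H)$, apply the commutativity $R_F(U)\circ d_H=d_G\circ R_K(U)$, evaluate $R_K(U)(\hat\psi)$ via the induced-character formula, and restrict to $p$-regular elements. In fact you are more careful than the paper on one point it glosses over: Proposition~\ref{inducedchar} is stated only for honest characters, whereas $\hat\psi$ is a priori virtual, and your appeal to the ghost-ring extension $\tilde R_K(U)$ and the injectivity of $\varepsilon_H$ is exactly the right way to justify applying the formula on all of $R_K(H)$.
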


To conclude this section, we define $\tilde R_F(U)$ to complete the rest of the diagram:
$$\tilde R_F(U):\tilde R_F(H)\longrightarrow\tilde R_F(G),\quad
(w_h)_{h\in\mathscr E_p(H)}\mapsto
\left(\prod_{u\in\langle x\rangle\backslash U/H}w_{\varphi_u(x^{n_u})}\right)_{x\in\mathscr E_p(G)}.$$
It is clear that $\tilde R_F(U)$ is multiplicative, preserves duals, is algebraic of degree $|U/H|$,
and extends the function $R_F(U)$ to ghost rings.
Again since $\xi_H$ is injective with finite cokernel,
we can see that $\tilde R_F(U)$ is the unique multiplicative extension of $R_F(U)$.
Finally we have the following diagram where each face is commutative.
We omit the names of the maps for simplicity.
Each function is multiplicative and preserves duals.
The dashed arrows are algebraic of degree $|U/H|$.
The solid arrows are all additive (hence algebraic of degree 1).

\begin{equation}
\label{bigpic}
\raisebox{-0.5\height}{\includegraphics{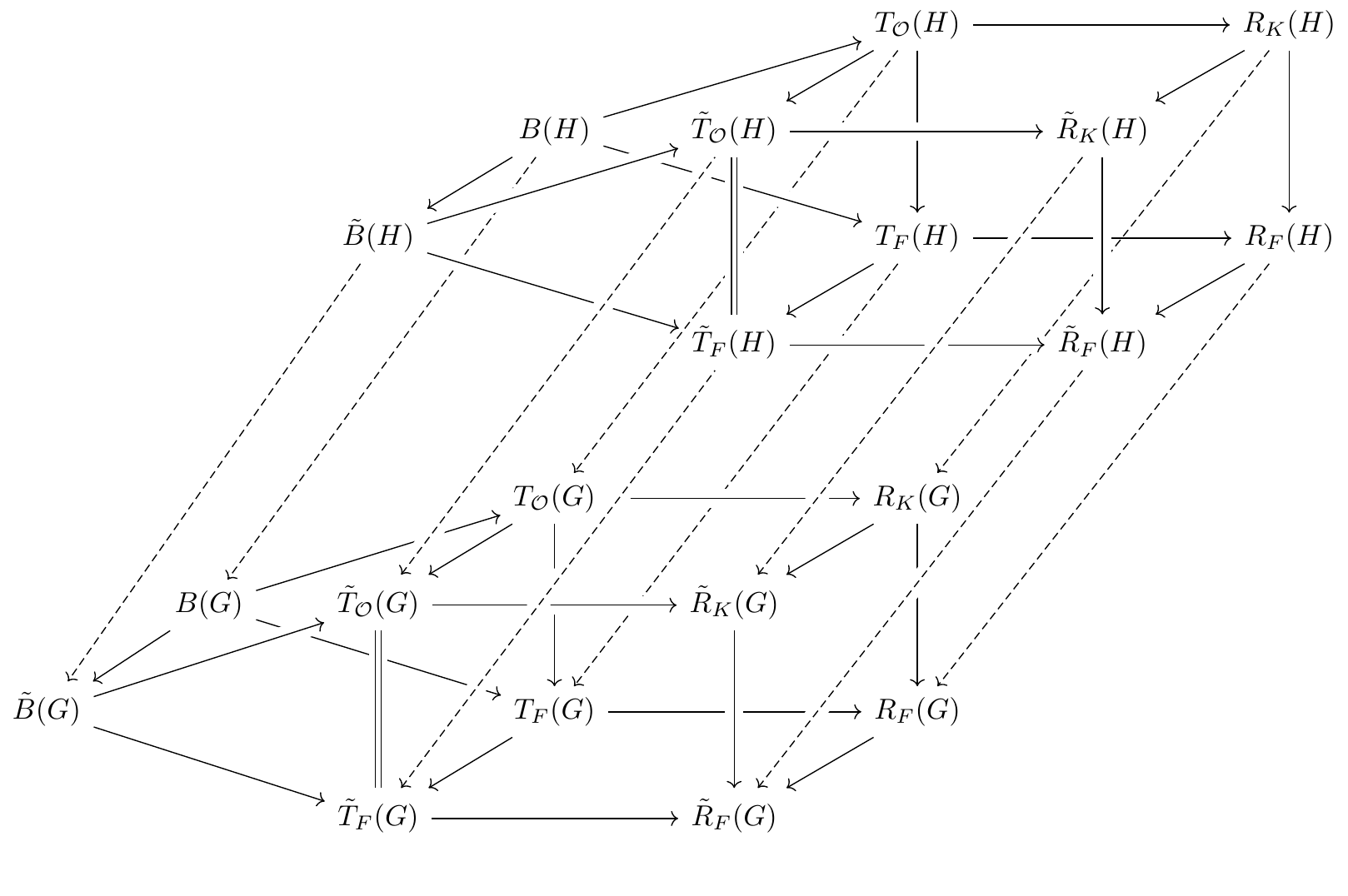}}
\end{equation}

%%%INFLATION FUNCTORS%%%
\section{Inflation Functors}\label{InfFun}
Here we recall the notion of inflation functors (see \cite{Bouc2} for more details).
We let $B(G,H)$ denote the Grothendieck group of the isomorphism classes of finite $(G,H)$-bisets
with respect to disjoint unions.
In other words, $B(G,H)$ is just $B(G\times H^{\text{op}})$.
Next we let $I(G,H)$ denote the subgroup of $B(G,H)$
generated by the isomorphism classes of finite right-free $(G,H)$-bisets.
Every element of $I(G,H)$ can be written in the form $[U]-[U']$
for some right-free $(G,H)$-bisets $U$ and $U'$ (though not necessarily in a unique way).
If $K$ is an additional finite group,
then there is a bilinear pairing

$$I(G,H)\times I(H,K)\longrightarrow I(G,K),\quad(u,v)\mapsto u\times_Hv$$

\noindent induced by tensoring over $H$.
We can then define the category $\mathcal{I}$ to have as objects all finite groups
and $\text{Hom}_{\mathcal{I}}(H,G)=I(G,H)$.
For $u\in\text{Hom}_{\mathcal{I}}(H,G)$ and $v\in\text{Hom}_{\mathcal{I}}(K,H)$,
the composition $u\circ v$ is defined to be $u\times_Hv$.
The identity in $\text{Hom}_{\mathcal{I}}(G,G)$ is $[\text{Id}_G]$,
where $\text{Id}_G$ is $G$ as a $(G,G)$-biset with multiplication from $G$ on both sides.
Since the morphism sets of $\mathcal I$ are abelian groups and composition is bilinear,
we see that $\mathcal I$ is a preadditive category.
An inflation functor is then an additive functor
$\mathcal I\longrightarrow\mathsf{Ab}$,
where $\mathsf{Ab}$ denotes the category of abelian groups.

We will write all abelian groups multiplicatively.
So if $A$ and $B$ are two abelian groups,
the ``zero'' morphism of $\text{Hom}_{\mathsf{Ab}}(A,B)$
is the homomorphism $a\mapsto1_B$ for all $a\in A$, where $1_B$ is the identity of $B$.
If $f,g\in\text{Hom}_{\mathsf{Ab}}(A,B)$, then $fg\in\text{Hom}_{\mathsf{Ab}}(A,B)$
is the homomorphism $a\mapsto f(a)g(a)$.
And $f^{-1}$, the inverse of $f$ in $\text{Hom}_{\mathsf{Ab}}(A,B)$
is the homomorphism $a\mapsto f(a)^{-1}$.

%%%UNIT GROUPS AS INFLATION FUNCTORS%%%
\section{Unit Groups as Inflation Functors}\label{UnitGp}
The goal of this section is to define inflation functors
for each of the unit groups of the representation rings and their ghost rings discussed above.
We let $K=\bar\Q_p$, the algebraic closure of the $p$-adic numbers
and $\mathcal O$ be the integral closure of $\Z_p$ in $K$.
Although $\mathcal O$ is not a complete DVR, it has a unique maximal ideal $\mathfrak p$.
We then set $F=\mathcal O/\mathfrak p,$ which is an algebraically closed field of characteristic $p$.
Now for any fixed finite group $G$,
the canonical map $T_{\mathcal O}(G)\rightarrow T_F(G)$ is still an isomorphism.
We can then find a $p$-modular system $(K',\mathcal O', F')$ with embeddings
$K'\hookrightarrow K,\mathcal O'\hookrightarrow\mathcal O,$ and $F'\hookrightarrow F$
that give isomorphisms
$R_{K'}(G)\stackrel{\sim}{\rightarrow}R_K(G),$
$R_{F'}(G)\stackrel{\sim}{\rightarrow}R_F(G),$
$T_{F'}(G)\stackrel{\sim}{\rightarrow}T_F(G),$
and $T_{\mathcal O'}(G)\stackrel{\sim}{\rightarrow}T_{\mathcal O}(G)$.
So with some abuse of notation,
when focusing on a particular group $G$,
we will identify $T_{\mathcal O}(G)$ with $T_{\mathcal O'}(G)$
(with similar identifications for the other representation rings).

For each finite group $G$, we have a ring $R(G)$ where $R(-)$
will stand for $B(-),T_{\mathcal O}(-), T_F(-), R_F(-), R_K(-)$,
or one of their associated ghost rings.
And if $G$ and $H$ are two finite groups and $U$ is a right-free $(G,H)$-biset,
we have a multiplicative function $R(U):R(H)\longrightarrow R(G)$,
hence a group homomorphism $R(U)^\times:R(H)^\times\longrightarrow R(G)^\times$.
We will show that each possible $R(-)^\times$ defines an inflation functor.
Now if $a\in I(G,H)$, then $a=[U]-[U']$ for some right-free $(G,H)$-bisets $U$ and $U'$.
Hence, we can define the group homomorphism
$R(a)^\times:=R(U)^\times(R(U')^\times)^{-1}:R(H)^\times\longrightarrow R(G)^\times$.
We will show explicitly for $R(-)=\tilde T_F(-)$
that this makes $\tilde T_F(-)^\times$ an inflation functor,
and then it will be clear that similar proofs will show $R(-)^\times$ is also an inflation functor
for the other choices of $R(-)$.

To see that $\tilde T_F(a)$ is well-defined,
suppose additionally that $a=[X]-[X']$ for some other right-free $(G,H)$-bisets $X$ and $X'$.
Then $[U]-[U']=[X]-[X']$, hence $[U\sqcup X']=[X\sqcup U']$ in $I(G,H)$.
Therefore $U\sqcup X'$ and $X\sqcup U'$ are isomorphic right-free $(G,H)$-bisets.
So $\tilde T_F(U\sqcup X')^\times=\tilde T_F(X\sqcup U')^\times$.
Now if $(E,c)\in\mathscr T_p(G)$,
$(u_1,\dots,u_n)$ is a set of representatives of $E\backslash U/H$,
and $(x_1',\dots,x_m')$ is a set of representatives of $E\backslash X'/H$,
then clearly $(u_1,\dots,u_n,x_1',\dots,x_m')$ is a set of representatives
of $E\backslash(U\sqcup X')/H$.
From here we can see that
$\tilde T_F(U\sqcup X')^\times=\tilde T_F(U)^\times\tilde T_F(X')^\times$.
Similarly, $\tilde T_F(X\sqcup U')^\times=\tilde T_F(X)^\times\tilde T_F(U')^\times$.
So we see that $\tilde T_F(U)^\times\tilde T_F(X')^\times=\tilde T_F(X)^\times\tilde T_F(U')^\times$.
Hence $\tilde T_F(U)^\times(\tilde T_F(U')^\times)^{-1}=
\tilde T_F(X)^\times(\tilde T_F(X')^\times)^{-1}$.
Thus we see that $\tilde T_F(a)^\times$ is well-defined.
So for each finite group $G$, we have an abelian group $\tilde T_F(G)^\times$,
and for every $a\in I(G,H)$, we have a homomorphism of abelian groups
$\tilde T_F(a)^\times:\tilde T_F(H)^\times\longrightarrow\tilde T_F(G)^\times$.

It is clear that $\tilde T_F([\text{Id}_G])^\times:\tilde T_F(G)^\times\longrightarrow\tilde T_F(G)^\times$
is the identity function.
So we next show that $\tilde T_F(-)^\times$ preserves compositions.
Suppose that $G,H,$ and $K$ are finite groups,
$U$ is a right-free $(G,H)$-biset, and $V$ is a right-free $(H,K)$-biset.
Then $U\times_HV$ is a right-free $(G,K)$-biset.
We will show that $\tilde T_F(U\times_HV)^\times=\tilde T_F(U)^\times\circ\tilde T_F(V)^\times$.
First note that if $u\in U,v\in V$, and $T\leq K,$
then $\prescript{(u,_Hv)}{}{T}=\prescript{u}{}{(\prescript{v}{}{T})}$.
Similarly $S^{(u,_Hv)}=(S^u)^v$ for any $S\leq G$.
In particular, for $(E,c)\in\mathscr T_p(G)$,
we have $E^{(u,_Hv)}=(E^u)^v$ and $O_p(E)^{(u,_Hv)}=(O_p(E)^u)^v$.
Hence $c^{(u,_Hv)},(c^u)^v\in E^{(u,_Hv)}/O_p(E)^{(u,_Hv)}$.
We will show that in fact, $c^{(u,_Hv)}=(c^u)^v$.
We have $e_{(u,_Hv)}=[E:O_p(E)(E\cap\prescript{(u,_Hv)}{}{K})]$
and $f_{(u,_Hv)}=[O_p(E)(E\cap\prescript{(u,_Hv)}{}{K}):E\cap\prescript{(u,_Hv)}{}{K}]$.
Hence $e_{(u,_Hv)}f_{(u,_Hv)}=[E:E\cap\prescript{(u,_Hv)}{}{K}]$.
Next we have $e_u=[E:O_p(E)(E\cap\prescript{u}{}{H})]$
and $f_u=[O_p(E)(E\cap\prescript{u}{}{H}):E\cap\prescript{u}{}{H}].$
Hence $e_uf_u=[E:E\cap\prescript{u}{}{H}]$.
Now $\prescript{(u,_Hv)}{}{K}=\prescript{u}{}{(\prescript{v}{}{K})}\leq\prescript{u}{}{H}$.
Hence $E\cap\prescript{u}{}{(\prescript{v}{}{K})}\leq E\cap\prescript{u}{}{H}$, and we have
$$e_{(u,_Hv)}f_{(u,_Hv)}=[E:E\cap\prescript{u}{}{(\prescript{v}{}{K})}]
=[E:E\cap\prescript{u}{}{H}][E\cap\prescript{u}{}{H}:E\cap\prescript{u}{}{(\prescript{v}{}{K})}]
=e_uf_u[E\cap\prescript{u}{}{H}:E\cap\prescript{u}{}{(\prescript{v}{}{K})}].$$
We see that $\varphi_u$ maps $E\cap\prescript{u}{}{H}$ onto $E^u$
and $E\cap\prescript{u}{}{(\prescript{v}{}{K})}$ onto $E^u\cap\prescript{v}{}{K}$.
Hence $[E^u:E^u\cap\prescript{v}{}{K}]=
[E\cap\prescript{u}{}{H}:(E\cap\prescript{u}{}{(\prescript{v}{}{K})})(E\cap\text{Ker}(\varphi_u))]$
by an isomorphism theorem.
But $E\cap\text{Ker}(\varphi_u)\leq E\cap\prescript{u}{}{(\prescript{v}{}{K})}.$
So we have $[E^u:E^u\cap\prescript{v}{}{K}]=
[E\cap\prescript{u}{}{H}:E\cap\prescript{u}{}{(\prescript{v}{}{K})}].$
Now setting $e_v=[E^u:O_p(E)^u(E^u\cap\prescript{v}{}{K})]$
and $f_v=[O_p(E)^u(E^u\cap\prescript{v}{}{K}):E^u\cap\prescript{v}{}{K}],$
we then have $e_{(u,_Hv)}f_{(u,_Hv)}=e_uf_ue_vf_v.$

Now suppose $c=sO_p(E)$ for some $s\in E$.
Then $s^{e_{(u,_Hv)}}=ax$ for some $a\in O_p(E)$ and $x\in E\cap\prescript{(u,_Hv)}{}{K}$.
Then $c^{(u,_Hv)}=\varphi_{(u,_Hv)}(x)^{f_{(u,_Hv)}}O_p(E)^{(u,_Hv)}$.
On the other hand, $s^{e_u}=by$ for some $b\in O_p(E)$ and $y\in E\cap\prescript{u}{}{H}$.
So $c^u=\varphi(y)^{f_u}O_p(E)^u$.
Then $\varphi_u(y)^{f_ue_v}=tz$ for some $t\in O_p(E)^u$ and $z\in E^u\cap\prescript{v}{}{K}$.
Therefore $(c^u)^v=\varphi_v(z)^{f_v}O_p(E)^{(u,_Hv)}$.
So we will show that
$\varphi_{(u,_Hv)}(z)^{f_{(u,_Hv)}}O_p(E)^{(u,_Hv)}=\varphi_v(z)^{f_v}O_p(E)^{(u,_Hv)}$.

Now $s^{e_{(u,_Hv)}f_{(u,_Hv)}}=(ax)^{f_{(u,_Hv)}}=a'x^{f_{(u,_Hv)}}$ for some $a'\in O_p(E)$.
Hence we have $s^{e_{(u,_Hv)}f_{(u,_Hv)}}(u,_Hv)=a'(u,_Hv)\varphi_{(u,_Hv)}(x)^{f_{(u,_Hv)}}$.
On the other hand,
$s^{e_{(u,_Hv)}f_{(u,_Hv)}}=s^{e_uf_ue_vf_v}=(by)^{f_ue_vf_v}=b'y^{f_ue_vf_v}$
for some $b'\in O_p(E)$.
Hence also $$s^{e_{(u,_Hv)}f_{(u,_Hv)}}(u,_Hv)=b'(y^{f_ue_vf_v}u,_Hv)
=b'(u\varphi_u(y)^{f_ue_vf_v},_Hv)=b'(u,_H\varphi_u(y)^{f_ue_vf_v}v)
=b'(u,_H(cz)^{f_v}v).$$
Now $(tz)^{f_v}=t'z^{f_v}$ for some $t'\in O_p(E)^u$.
Hence $t'=\varphi_u(q)$ for some $q\in O_p(E)$.
Thus $$b'(u,_H(cz)^{f_v}v)=b'(u,_H\varphi_u(q)z^{f_v}v)=b'(u\varphi_u(q),_Hv\varphi_v(z)^{f_v})
=b'(qu,_Hv)\varphi_v(z)^{f_v}=b'q(u,_Hv)\varphi_v(z)^{f_v}.$$
So altogether we have $a'(u,_Hv)\varphi_{(u,_Hv)}(x)^{f_{(u,_Hv)}}=b'q(u,_Hv)\varphi_v(z)^{f_v}$.
Since $a',b',q\in O_p(E)$, this implies that
$\varphi_{(u,_Hv)}(x)^{f_{(u,_Hv)}}O_p(E)^{(u,_Hv)}=\varphi_v(z)^{f_v}O_p(E)^{(u,_Hv)}$.
Thus $c^{(u,_Hv)}=(c^u)^v$ as claimed.

Next we claim that $E\backslash U\times_HV/K
\cong\coprod_{u\in E\backslash U/H}E^u\backslash V/K$.
So pick a set of representative $E\backslash U/H=\{u_1,\dots,u_k\}$.
And for $j=1,\dots,k$, pick a set of representatives
$E^{u_j}\backslash V/K=\{v_{j1},\dots,v_{jl_j}\}$.
We will show that $\{(e_j,_Hv_{ji}):j=1,\dots,k,i=1,\dots,l_j\}$
is a set of representatives of $E\backslash U\times_HV/K$.
So let $(u,_Hv)\in U\times_HV$.
Then $u=au_jh$ for some $a\in E,j\in\{1,\dots,k\}$ and $h\in H$.
And $hv=a_jv_{ji}k$ for some $a_j\in E^{u_j},i\in\{1,\dots,l_j\},$ and $k\in K$.
Since $a_j\in E^{u_j}$, we have $a_j=\varphi_{u_j}(q)$ for some $q\in E$.
Thus $$(u,_Hv)=(au_jh,_Hv)=a(u_j,_Hhv)=a(u_j,_Ha_jv_{ji}k)
=a(u_ja_j,_Hv_{ji})k=a(qu_j,_Hv_{ji})k=aq(u_j,_Hv_{ji})k,$$
showing $(u,_Hv)$ is in the same $(E,K)$-class of $(u_j,_Hv_{ji})$.
So every element of $U\times_HV$ is in the same $(E,K)$-class of some $(u_j,_Hv_{ji})$.
Now to see these classes are all distinct,
suppose that $(u_j,_Hv_{ji})=a(u_{j'},_Hv_{j'i'})k$ for some $a\in E$ and $k\in K$.
Then $(u_j,_Hv_{ji})=(au_{j'},_Hv_{j'i'}k)$ implying there exists $h\in H$
such that $u_j=au_{j'}h$ and $v_{ji}=h^{-1}v_{j'i'}k$.
Since $u_j=au_{j'}h,$ we see that $j=j'$ and $h\in E^{u_j}$.
Then $h^{-1}\in E^{u_j}$ as well, and $v_{ji}=h^{-1}v_{ji'}k$ implies $i=i'$.
So we see all these classes are distinct.
Thus $\{(u_j,_Hv_{ji}):j=1,\dots,k,i=1,\dots,l_j\}$ is a complete set of representatives
of $E\backslash U\times_HV/K$.
Hence $E\backslash U\times_HV/K
\cong\coprod_{u\in E\backslash U/H}E^u\backslash V/K$ as claimed.

Now to put this altogether, we have the following:
\begin{align*}
(\tilde T_F(U)^\times\circ\tilde T_F(V)^\times)((z_{(C,a)})_{(C,a)\in\mathscr T_p(K)})
&=\tilde T_F(U)^\times\left(\left(\prod_{v\in D\backslash V/K}z_{(D^v,b^v)}\right)_{(D,b)\in\mathscr T_p(H)}\right)\\
&=\left(\prod_{u\in E\backslash U/H}\prod_{v\in E^u\backslash V/K}z_{((E^u)^v,(c^u)^v)}\right)_{(E,c)\in\mathscr T_p(G)}\\
&=\left(\prod_{(u,_Hv)\in E\backslash U\times_HV/K}z_{(E^{(u,_Hv)},c^{(u,_Hv)})}\right)_{(E,c)\in\mathscr T_p(G)}\\
&=\tilde T_F(U\times_HV)^\times((z_{(C,a)})_{(C,a)\in\mathscr T_p(K)})
\end{align*}
So we see that
$\tilde T_F(U)^\times\circ\tilde T_F(V)^\times=\tilde T_F(U\times_HV)^\times.$
Now suppose that $a\in I(G,H)$ and $b\in I(H,K)$.
Then $a=[U]-[V]$ for some right-free $(G,H)$-bisets $U$ and $V$,
and $b=[Z]-[W]$ for some right-free $(H,K)$-bisets $Z$ and $W$.
Then $a\circ b=[U\times_HZ\sqcup V\times_HW]-[U\times_HW\sqcup V\times_HW]$,
and we see that
\begin{align*}
\tilde T_F(a\circ b)^\times
&=\tilde T_F(U\times_HZ\sqcup V\times_HW)^\times
(\tilde T_F(U\times_HW\sqcup V\times_HZ)^\times)^{-1}\\
&=(\tilde T_F(U)^\times\circ\tilde T_F(Z)^\times)
(\tilde T_F(V)^\times\circ\tilde T_F(W)^\times)
(\tilde T_F(U)^\times\circ\tilde T_F(W)^\times)^{-1}
(\tilde T_F(V)^\times\circ\tilde T_F(Z)^\times)^{-1}\\
&=(\tilde T_F(U)^\times\circ\tilde T_F(Z)^\times)
((\tilde T_F(V)^\times)^{-1}\circ(\tilde T_F(W)^\times)^{-1})
(\tilde T_F(U)^\times\circ(\tilde T_F(W)^\times)^{-1})
((\tilde T_F(V)^\times)^{-1}\circ\tilde T_F(Z)^\times)\\
&=(\tilde T_F(U)^\times\circ\tilde T_F(Z)^\times(\tilde T_F(W)^\times)^{-1})
((\tilde T_F(V)^\times)^{-1}\circ\tilde T_F(Z)^\times(\tilde T_F(W)^\times)^{-1})\\
&=\tilde T_F(U)^\times(\tilde T_F(V)^\times)^{-1}\circ
\tilde T_F(Z)^\times(\tilde T_F(W)^\times)^{-1}\\
&=\tilde T_F(a)^\times\circ\tilde T_F(b)^\times
\end{align*}

This shows that $\tilde T_F(-)^\times:\mathscr I\longrightarrow\mathsf{Ab}$ is a functor.
Lastly, we must show that $\tilde T_F(-)^\times$ is additive.
So suppose $a,b\in I(G,H)$.
Then $a=[U]-[U']$ and $b=[V]-[V']$ for some right-free $(G,H)$-bisets $U,U',V,$ and $V'$.
So $a+b=[U\sqcup V]-[U'\sqcup V'],$
and we have the following:
\begin{align*}
\tilde T_F(a+b)^\times
&=\tilde T_F(U\sqcup V)^\times(\tilde T_F(U'\sqcup V')^\times)^{-1}\\
&=\tilde T_F(U)^\times\tilde T_F(V)^\times(\tilde T_F(U')^\times\tilde T_F(V')^\times)^{-1}\\
&=\tilde T_F(U)^\times(\tilde T_F(U')^\times)^{-1}\tilde T_F(V)^\times(\tilde T_F(V')^\times)^{-1}\\
&=\tilde T_F(a)^\times\tilde T_F(b)^\times
\end{align*}

So we see that $\tilde T_F(-)^\times:\mathscr I\longrightarrow\mathsf{Ab}$ is additive,
hence an inflation functor.
From here the corresponding result
for the unit groups of the other representation rings and their ghost rings should be clear.
Returning to Diagram \ref{bigpic},
we recall that all arrows are multiplicative,
hence restricting to unit groups gives a commutative diagram in $\mathsf{Ab}$.
So all the connecting maps induce morphisms of inflation functors.
In particular, all the ghost maps define morphisms of inflation functors.

%%%ORTHOGONAL UNITS%%%
\section{Orthogonal Units}\label{OrthUnits}
In this last section, we introduce the idea of \emph{orthogonal units}
in the various representation rings and their ghost rings.
Recall that every ring of interest has a duality operator on the ring.
We define an orthogonal unit to be a unit whose inverse is its dual element.
The set of all orthogonal units forms a subgroup of the unit group in each case.
We first like to show that in each case the orthogonal unit group
is just the torsion subgroup of the full unit group.

Again we focus on the trivial source ring and note the corresponding result for the other rings.
So we denote the orthogonal unit group
$U_\circ(T_F(G)):=\{a\in T_F(G)^\times:a^{-1}=a^\circ\}\leq T_F(G)^\times$.
Similarly, $U_\circ(\tilde T_F(G)):=\{b\in\tilde T_F(G)^\times:b^{-1}=b^\circ\}\leq\tilde T_F(G)^\times$.
Now $b=(z_{(E,c)})_{(E,c)\in\mathscr T_p(G)}\in\tilde T_F(G)^\times$
iff each $z_{(E,c)}$ is a unit of $\Z[\mu]$.
Moreover, $b\in U_\circ(\tilde T_F(G))$ iff each $z_{(E,c)}$ is an orthogonal unit of $\Z[\mu]$.
That is $z_{(E,c)}^{-1}=\sigma_{-1}(z_{(E,c)})$, and therefore $z_{(E,c)}\sigma_{-1}(z_{(E,c)})=1$.
Now Theorem 4.12 in \cite{Washington} implies
that in this situation $z_{(E,c)}$ is a root of unity in $\mathcal O$,
and if $e=\text{exp}(G)_{p'}$, we have $U_\circ(\tilde T_F(G))=
(\prod_{(E,c)\in\mathscr T_p(G)}\{\pm\mu^f:f=0,\dots,e-1\})^{G\times\Delta}.$
This is then just the torsion subgroup of $\tilde T_F(G)^\times$.
We want to also show that $U_\circ(T_F(G))$ is the torsion subgroup of $T_F(G)^\times$.
If $a\in U_\circ(T_F(G))$, then since $\tau_G$ is multiplicative and preserves duals,
$\tau_G(a)\in U_\circ(\tilde T_F(G))$.
Hence $\tau_G(a)$ is a torsion element of $\tilde T_F(G)$, thus has finite order.
Then since $\tau_G$ is injective,
this implies that also $a$ has finite order in $T_F(G)^\times$.
Hence $U_\circ(T_F(G))$ is a torsion subgroup of $T_F(G)^\times$.
But if $a\in T_F(G)^\times$ has finite order $n$,
then $\tau_G(a)$ has order $n$ in $\tilde T_F(G)^\times$ since $\tau_G$ is injective.
So $\tau_G(a)\in U_\circ(\tilde T_F(G))$ and therefore
$\tau_G(a^{-1})=\tau_G(a)^{-1}=\tau_G(a)^\circ=\tau_G(a^\circ)$,
since also $\tau_G$ preserves duals.
Again since $\tau_G$ is injective, we see that $a^{-1}=a^\circ$.
So every torsion unit of $T_F(G)$ is orthogonal.
Thus we conclude that $U_\circ(T_F(G))$ is precisely the torsion subgroup of $T_F(G)^\times$.

We can similarly define orthogonal unit groups for the other representation rings and their ghost rings.
In all cases, we see that the group of orthogonal units is just the torsion subgroup of the full unit group.
So restricting further to the torsion subgroup of the group of units,
we get sub-inflation functors for all representation and ghost rings.
Notice that $U_\circ(B(G))=B(G)^\times$ since every element of $B(G)^\times$ has finite order,
in fact has order dividing 2.
The theory of biset functors has been instrumental in studying $B(G)^\times.$
The hope is that can we similarly study $U_\circ(T_F(G))$ using this inflation functor theory.
Now also, we have $U_\circ(R_K(G))=\{\chi\in R_K(G)^\times:\chi^{-1}=\chi^\circ\}$,
and similarly $U_\circ(R_F(G))$.
These two orthogonal unit groups can be determined quite easily,
and we finish with these results.

We let $\hat G$ denote the group of linear characters of $G$.
That is, $\hat G$ is the set of homomorphisms $G\longrightarrow\mathcal O^\times$.
This is a group under pointwise multiplication.
In \cite{Yamauchi}, Kenichi Yamauchi proved that $U_\circ(R_K(G))=\{\pm\chi:\chi\in\hat G\}$.
Hence $U_\circ(R_K(G))$ is a finite group of order $2[G:G']$,
where $G'$ denotes the commutator subgroup of $G$. 
Now to determine, $U_\circ(R_F(G))$ recall that $d_G$ is surjective and has a section
$m_G:R_F(G)\longrightarrow R_K(G)$ defined by $m_G(\psi)(x)=\psi(x_{p'})$ for all $x\in G$.
We see in \cite{CurRein} that $m_G$ is a ring morphism that preserves duals.
Hence $m_G(U_\circ(R_F(G)))\leq U_\circ(R_K(G))$.
On the other hand, since $d_G$ is a ring morphism preserving duals,
we also have $U_\circ(R_F(G))=d_G(m_G(U_\circ(R_F(G))))
\leq d_G(U_\circ(R_K(G)))\leq U_\circ(R_F(G))$.
So we have $d_G(U_\circ(R_K(G)))=U_\circ(R_F(G))$.
Hence every element of $U_\circ(R_F(G))$ is just the restriction of an element of $U_\circ(R_K(G))$
to the set of $p'$-elements of $G$.
We have thus proven the following theorem:

\begin{theorem}\label{OrthBra}
$U_\circ(R_F(G))=\{\pm\chi|_{\mathscr E_p(G)}:\chi\in\hat G\}$.
\end{theorem}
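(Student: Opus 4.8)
The plan is to deduce this directly from Yamauchi's description of $U_\circ(R_K(G))$ together with the fact, recalled in the paragraph above, that both the decomposition map $d_G$ and its section $m_G$ are dual-preserving ring homomorphisms. First I would recall from \cite{Yamauchi} that $U_\circ(R_K(G))=\{\pm\chi:\chi\in\hat G\}$, and observe that $d_G\colon R_K(G)\to R_F(G)$ acts on an ordinary character by restriction to the set $\mathscr E_p(G)$ of $p$-regular elements; in particular $d_G(\chi)=\chi|_{\mathscr E_p(G)}$ for every $\chi\in\hat G$. Hence $d_G(U_\circ(R_K(G)))=\{\pm\chi|_{\mathscr E_p(G)}:\chi\in\hat G\}$, so it suffices to prove the equality $d_G(U_\circ(R_K(G)))=U_\circ(R_F(G))$.

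For the inclusion ``$\subseteq$'' I would use that $d_G$ is a ring morphism preserving duals: if $u\in U_\circ(R_K(G))$, then $d_G(u)$ is a unit of $R_F(G)$ with $d_G(u)^{-1}=d_G(u^{-1})=d_G(u^\circ)=d_G(u)^\circ$, so $d_G(u)\in U_\circ(R_F(G))$. For the reverse inclusion I would bring in the section $m_G\colon R_F(G)\to R_K(G)$, $m_G(\psi)(x)=\psi(x_{p'})$, which by \cite{CurRein} is a dual-preserving ring morphism with $d_G\circ m_G=\mathrm{id}_{R_F(G)}$. The same argument as before shows $m_G(U_\circ(R_F(G)))\subseteq U_\circ(R_K(G))$, and applying $d_G$ together with $d_G\circ m_G=\mathrm{id}$ gives $U_\circ(R_F(G))=d_G(m_G(U_\circ(R_F(G))))\subseteq d_G(U_\circ(R_K(G)))$. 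Combining the two inclusions yields $d_G(U_\circ(R_K(G)))=U_\circ(R_F(G))$, which together with the computation of the left-hand side finishes the proof.

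The argument is essentially a two-line sandwich, so I do not expect any genuine obstacle; the only points requiring care are bookkeeping ones: checking that $m_G$ really is a ring morphism commuting with the duality operators (cited from \cite{CurRein}), and recording explicitly that the decomposition map sends an ordinary linear character to its restriction to $p$-regular elements, so that nothing beyond Yamauchi's theorem is needed. I would also remark that the parametrization $\chi\mapsto\pm\chi|_{\mathscr E_p(G)}$ need not be injective, but since the theorem only describes $U_\circ(R_F(G))$ as a set this causes no difficulty.
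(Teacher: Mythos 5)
Your proof is correct and follows exactly the same route as the paper: invoke Yamauchi's description of $U_\circ(R_K(G))$, then use that both $d_G$ and its section $m_G$ are dual-preserving ring morphisms to sandwich $U_\circ(R_F(G))$ between images under $d_G$. The paper condenses your two inclusions into the single chain $U_\circ(R_F(G))=d_G(m_G(U_\circ(R_F(G))))\leq d_G(U_\circ(R_K(G)))\leq U_\circ(R_F(G))$, but the argument is identical.
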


%%References


\begin{thebibliography}{11}

\bibitem{Boltje}Boltje, R.:
Linear source modules and trivial source modules.
\emph{Group representations: cohomology, group actions and topology (Seattle, WA, 1996)}, 7--30,
Proc. Sympos. Pure Math., 63,
\emph{Amer. Math. Soc., Providence, RI}, 1998.
\href{http://www.ams.org/mathscinet-getitem?mr=1603127}{MR \textbf{(99d:20016)}}

\bibitem{BolKuls}Boltje, R.; K\"ulshammer, B.:
A generalized Brauer construction and linear source modules.
\emph{Trans. Amer. Math. Soc.} \textbf{352} (2000), no. 7, 3411--3428.
\href{http://www.ams.org/mathscinet-getitem?mr=1694281}{MR \textbf{(2000j:20017)}}

\bibitem{BolPer}Boltje, R.; Perepelitsky, P.:
Orthogonal units of the bifree double Burnside ring.
\emph{J. Pure Appl. Algebra} \textbf{219} (2015), no. 1, 47--58.
\href{http://www.ams.org/mathscinet-getitem?mr=3240822}{MR \textbf{(3240822)}}

\bibitem{Bouc}Bouc, S.:
Non-additive exact functors and tensor induction for Mackey functors.
\emph{Mem. Amer. Math. Soc.} \textbf{144} (2000), no. 683, viii+74 pp.
\href{http://www.ams.org/mathscinet-getitem?mr=1662073}{MR \textbf{(2000i:19002)}}

\bibitem{Bouc2}Bouc, S.:
Biset functors for finite groups. Lecture Notes in Mathematics, 1990.
\emph{Springer-Verlag, Berlin}, 2010. x+299 pp. ISBN: 978-3-642-11296-6
\href{http://www.ams.org/mathscinet-getitem?mr=2598185}{MR \textbf{(2011d:20098)}}

\bibitem{Bouc3}Bouc, S.:
The functor of units of Burnside rings for $p$-groups.
\emph{Comment. Math. Helv.} \textbf{82} (2007), no. 3, 583--615.
\href{http://www.ams.org/mathscinet-getitem?mr=2314054}{MR \textbf{(2008e:19001)}}

\bibitem{CurRein}Curtis, C. W.; Reiner, I.:
Methods of representation theory. Vol. II. With applications to finite groups and orders.
Pure and Applied Mathematics (New York). A Wiley-Interscience Publication.
\emph{John Wiley \& Sons, Inc., New York}, 1987. xviii+951 pp. ISBN: 0-471-88871-0.
\href{http://www.ams.org/mathscinet-getitem?mr=892316}{MR \textbf{(88f:20002)}}

\bibitem{Dress}Dress, A.:
Operations in representation rings.
\emph{Representation theory of finite groups and related topics}
\emph{(Proc. Sympos. Pure Math., Vol. XXI, Univ. Wisconsin, Madison, Wis., 1970)}, pp. 39--45.
\emph{Amer. Math. Soc., Providence, R.I.}, 1971.
\href{http://www.ams.org/mathscinet-getitem?mr=0325740}{MR \textbf{(48 \#4086)}}

\bibitem{GluIsa}Gluck, D.; Isaacs, I. M.:
Tensor induction of generalized characters and permutation characters.
\emph{Illinois J. Math.} \textbf{27} (1983), no. 3, 514--518.
\href{http://www.ams.org/mathscinet-getitem?mr=698312}{\textbf{(84j:20013)}}

\bibitem{Schneider}Schneider, P.:
Modular representation theory of finite groups.
\emph{Springer, Dordrecht}, 2013. viii+178 pp. ISBN: 978-1-4471-4831-9; 978-1-4471-4832-6.
\href{http://www.ams.org/mathscinet-getitem?mr=3012316}{MR \textbf{(3012316)}}

\bibitem{Washington}Washington, L. C.:
Introduction to cyclotomic fields. Second edition. Graduate Texts in Mathematics, 83.
\emph{Springer-Verlag, New York}, 1997. xiv+487 pp. ISBN: 0-387-94762-0
\href{http://www.ams.org/mathscinet-getitem?mr=1421575}{MR \textbf{(97h:11130)}}

\bibitem{Yamauchi}Yamauchi, K.:
On the Units in a Character Ring.
\emph{Hokkaido Math. J.} \textbf{20} (1991), no. 3, 477--479.
\href{http://www.ams.org/mathscinet-getitem?mr=1134981}{MR \textbf{(93a:20017)}}

\bibitem{Yoshida}Yoshida, T.:
On the unit groups of Burnside rings.
\emph{J. Math. Soc. Japan} \textbf{42} (1990), no. 1, 31--64.
\href{http://www.ams.org/mathscinet-getitem?mr=1027539}{MR \textbf{(90j:20027)}}


\end{thebibliography}
\end{document}